\newif{\ifarxiv}
\newcommand{\arxivorgd}[2]{\ifarxiv #1\else #2\fi}
\newcommand{\precg}[1]{\ensuremath{({=}\,#1)}}
\newcommand{\bbN}{\mathbb{N}}
\begin{document}
\title{String graphs with precise number of intersections\thanks{The first author was supported by the Czech Science Foundation Grant No. 19-27871X. The second author was supported by the Czech Science Foundation Grant No. 23-04949X.}}


\author{Petr Chmel\inst{1}\orcidID{0000-0002-9131-1458} \and
Vít Jelínek\inst{1}\orcidID{0000-0003-4831-4079}}

\authorrunning{P. Chmel and V. Jelínek} 

\institute{Computer Science Institute, Charles University, Prague, Czech Republic\\
\email{\{chmel,jelinek\}@iuuk.mff.cuni.cz}}

\maketitle

\begin{abstract}
A string graph is an intersection graph of curves in the plane.
A $k$-string graph is a graph with a string representation in which every pair of curves intersects in at most $k$ points.
We introduce the class of $\precg{k}$-string graphs as a further restriction of $k$-string graphs by requiring that every two curves intersect in either zero or precisely $k$ points.
We study the hierarchy of these graphs, showing that for any $k\geq 1$, $\precg{k}$-string graphs are a subclass of $\precg{k+2}$-string graphs as well as of $\precg{4k}$-string graphs; however, there are no other inclusions between the classes of \precg{k}-string and \precg{\ell}-string graphs apart from those that are implied by the above rules.
In particular, the classes of $\precg{k}$-string graphs and $\precg{k+1}$-string graphs are incomparable by inclusion for any $k$, and the class of $\precg{2}$-string graphs is not contained in the class of $\precg{2\ell+1}$-string graphs for any~$\ell$.
\end{abstract}

\keywords{intersection graph \and string graph \and hierarchy}

\section{Introduction}
An \emph{intersection representation} of a graph $G=(V,E)$ is a collection of sets 
$R=\{R(v)\colon v\in V\}$ such that for every two distinct vertices $u,v\in V,$ the sets
$R(u),R(v)$ intersect if and only if $u,v$ form an edge of~$G$. In this paper, we 
focus on \emph{string representations}, which are intersection representations where the sets 
$R(v)$ are curves in the plane. Graphs admitting a string representation are known as \emph{string 
graphs}, and their class is denoted by \textsc{string}. They were introduced by 
Sinden~\cite{SindenStringGraphIntroduction}.

A common way to further restrict string representations is to bound the number of intersections 
between a pair of curves -- this gives rise to the class of 
\emph{$k$-string graphs} (denoted by \textsc{$k$-string}), which are the graphs admitting a string
representation in which every two curves intersect in at most $k$ points.
This yields a hierarchy of classes parameterized by the number $k$. Clearly, \textsc{$k$-string} 
is a subclass of \textsc{$(k+1)$-string} for all $k\geq 1$, and Kratochvíl and 
Matoušek~\cite{KratochvilMatousekSEG} showed that the inclusions are strict.
Moreover, they showed that it is NP-complete to recognize $k$-string graphs for 
any fixed $k\geq 1$~\cite{KratochvilMatousekNPhard}.

A folklore result shows that any string graph can be represented as an intersection graph of 
paths on a rectilinear grid; this is known as a VPG representation. 
Asinowski et~al.~\cite{AsinowskiEtAlVPG} introduced the classes of B$_k$-VPG graphs, which are 
graphs admitting a VPG representation in which every path has at most $k$ bends. Clearly, \textsc{B$_k$-VPG}
is a subclass of \textsc{B$_{k+1}$-VPG}.
Chaplick et~al.~\cite{ChaplickEtAlNoodles} have shown that these inclusions are strict, and that 
for any $k\geq 1$, it is NP-complete to recognize the class \textsc{B$_k$-VPG}; for $k=0$, 
NP-completeness was proven by Kratochvíl~\cite{KratochvilGIGNPhard,KratochvilMatousekNPhard}.

By way of context, we also mention other hierarchies, such as $k$-DIR 
graphs~\cite{KratochvilMatousekSEG}: the intersection graphs of line segments in the plane with at 
most $k$ different slopes, $k$-length-segment graphs~\cite{CabelloJejcicHierarchies}: the 
intersection graphs of line segments in the plane with at most $k$ different lengths, $k$-size-disk 
graphs~\cite{CabelloJejcicHierarchies}: the intersection graphs of disks in the plane with at most 
$k$ different diameters, and $k$-interval graphs~\cite{TrotterHararyKIntervalGraphs}: the 
intersection graphs of unions of $k$ intervals on the real line.

We can also find examples of such hierarchies within contact graphs, where intersections are only permitted when the intersection point is an endpoint of at least one of the intersecting curves.
We may restrict the number of curves sharing a single point, which yields the class of $k$-contact graphs~\cite{HlinenyCGHierarchy} -- this class can be also restricted further to only line segments and not general strings.
Additionally, we can define the class \textsc{B$_k$-CPG} ~\cite{DenizEtAlCPGHierarchy}, which is analogous to \textsc{B$_k$-VPG}, except the graphs must be contact graphs.
Recognition of graphs from these classes is also known to be 
NP-complete~\cite{HlinenyCGRecognitionNPComplete,ChampseixEtAlBkCPGRecognitionNPComplete}.

In all of the above cases, the graph classes in the hierarchy form a chain of strict inclusions:
increasing the parameter $k$ by one yields a strict superclass.

We introduce a new hierarchy of so called $\precg{k}$-string graphs, which are the intersection graphs of 
curves in the plane such that every intersection is a crossing and every pair of intersecting curves
has \emph{exactly} $k$ crossings. This hierarchy turns out to have a more intricate structure than 
all the previously mentioned examples. For instance, while \textsc{\precg{k}-string} is a strict 
subclass of \textsc{\precg{k+2}-string} for any $k\ge 1$, the two classes \textsc{\precg{k}-string} 
and \textsc{\precg{k+1}-string} are incomparable by inclusion. With our results, we will
fully characterise all the inclusions between the classes of \precg{k}-string graphs, for $k\in\bbN$.

Our results show that the parity of $k$ plays a key role in understanding the relations between the 
classes of \precg{k}-string graphs. This motivates us to introduce \emph{odd-string} graphs, as 
graphs having a representation where any two intersecting curves have an odd number of intersections. 
We show that this class is a proper subclass of string graphs; indeed, there are \precg{2}-string 
graphs which are not odd-string graphs. 

We remark that this distinction between curves with odd and even number of intersections is somewhat 
reminiscent of the distinction between pair-crossing and odd-crossing numbers of graphs, which is 
relevant in the study of topological graph 
drawing~\cite{PachTothCrossingNumbers,PelsmajerSchaeferStefankovicCrossingNumbers,SchaeferCrossingNumberSurvey}.

Let us formally introduce our terminology.

\begin{definition}[Proper string representation]
A string representation $R$ is \emph{proper} if every curve in $R$ is a simple piecewise linear 
curve with finitely many bends, any two curves in $R$ intersect in at most finitely many points, no 
three curves in $R$ intersect in one point, and  no intersection point of two curves coincides with 
a bend or an endpoint of a curve. Note that this implies that every intersection of two curves is a 
crossing.
\end{definition}

\begin{definition}[$\precg{k}$-string graphs]
    A \emph{$\precg{k}$-string representation} is a proper representation in which any two curves 
are either disjoint or intersect in precisely $k$~points. A graph is a \emph{$\precg{k}$-string 
graph} if it has a $\precg{k}$-string representation. The class of such graphs is denoted by 
\textsc{\precg{k}-string}.
\end{definition}

We stress that a $\precg{k}$-string representation must, by definition, be proper. This prevents 
us, e.g., from increasing the number of intersection points of two curves by introducing contact 
points in which the curves touch but do not cross.

\section{The hierarchy}

Our main goal in this paper is to understand the inclusion hierarchy of the classes of 
$\precg{k}$-string graphs for various~$k\in\bbN$.  We begin by two simple results.

\begin{proposition}\label{prop:preciselystringplustwo}
    For all $k\ge 1$, \textsc{$\precg{k}$-string} is a subclass of \textsc{$\precg{k+2}$-string}.
\end{proposition}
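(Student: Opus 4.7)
Starting from a $\precg{k}$-string representation $R$ of a graph $G$, my plan is to modify $R$ locally near selected intersection points so that each intersecting pair of curves gains exactly two additional crossings, while disjoint pairs remain disjoint. The modified representation will then be a $\precg{k+2}$-string representation of $G$.

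For each edge $uv$ of $G$, I fix one intersection point $p_{uv}$ of the curves $R(u)$ and $R(v)$. Because $R$ is proper, the intersection points are finite in number and pairwise distinct, no three curves meet at one point, and no intersection point is a bend or endpoint. Hence for each chosen $p_{uv}$ I can pick an open disk $D_{uv}$ centred at $p_{uv}$ that (i) contains no other intersection point of $R$, (ii) meets no curve of $R$ other than $R(u)$ and $R(v)$, (iii) meets $R(u)$ and $R(v)$ each in a single short straight sub-arc crossing transversally at $p_{uv}$, and (iv) is pairwise disjoint from the disks chosen for all other edges.

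Inside each such $D_{uv}$, I then replace the sub-arc of $R(u)$ by a small piecewise-linear detour (a ``bump'') that still starts and ends at the two points where $R(u)$ enters and leaves $D_{uv}$, but which now crosses the sub-arc of $R(v)$ exactly three times within $D_{uv}$ instead of once. Concretely, one can use a narrow zigzag that briefly steps across $R(v)$, comes back across it, and then crosses it a third time at the position of the original $p_{uv}$; this adds two crossings between $R(u)$ and $R(v)$ and introduces no bend or self-intersection outside $D_{uv}$. Since the new bump is confined to $D_{uv}$, it does not touch any other curve, nor does it affect any other disk $D_{u'v'}$.

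After performing this local modification independently for every edge of $G$, every pair of adjacent curves gains exactly two additional intersection points and so now meets in $k+2$ points, while every previously disjoint pair remains disjoint. A final small perturbation, if needed, restores properness (removing any accidental triple points or coincident bends that a piecewise-linear detour might create). The only real thing to check is that the disks $D_{uv}$ can simultaneously satisfy (i)--(iv), which follows directly from properness of $R$; this is the step that carries the entire argument, but it is routine rather than a genuine obstacle.
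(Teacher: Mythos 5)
Your proposal is correct and follows essentially the same route as the paper: pick one intersection point per intersecting pair, isolate it in a small neighbourhood guaranteed by properness, and reroute one of the two curves locally so that it crosses the other three times instead of once, adding exactly two crossings per pair. The only cosmetic difference is that you make the disjointness of the chosen disks explicit, which the paper leaves implicit in the choice of ``sufficiently small'' neighbourhoods.
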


\begin{proposition}\label{prop:preciselystringtimesfour}
    For all $k\ge 1$, \textsc{$\precg{k}$-string} is a subclass of \textsc{$\precg{4k}$-string}.
\end{proposition}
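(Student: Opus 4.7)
The plan is to transform a given $\precg{k}$-string representation into a $\precg{4k}$-string representation by replacing each curve with a \emph{doubled} version that closely follows the original curve, makes a U-turn near one endpoint, and returns parallel to itself on the opposite side. Under this doubling, each single crossing of two original curves is locally replaced by a $2\times 2$ grid of four crossings, so the $k$ intersection points of two curves become exactly $4k$ intersection points after the transformation.

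More formally, let $R$ be a proper $\precg{k}$-string representation of $G$. For each curve $c \in R$ with endpoints $p$ and $q$, I would pick a thin tubular neighborhood $N_c$ of $c$ and define $c'$ to start slightly off one side of $p$, follow $c$ within $N_c$ parallel to it on that side until reaching close to $q$, make a small U-turn around $q$ inside $N_c$, and then return parallel to $c$ on the opposite side, ending slightly off $p$. The resulting $c'$ is a simple curve because its two parallel strands are disjoint when $N_c$ is thin enough, and they are joined only at the U-turn.

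The key step is to choose the common width $\varepsilon$ of the tubular neighborhoods small enough so that: the neighborhoods of two disjoint curves of $R$ are disjoint; the neighborhoods of two intersecting curves of $R$ overlap only inside small disks around the $k$ crossing points; and the neighborhood of any endpoint of a curve avoids all other curves entirely. Since $R$ has only finitely many curves, bends, endpoints, and crossings, a sufficiently small $\varepsilon$ satisfying all these conditions exists. With such an $\varepsilon$, inside each small disk around an original crossing of $c$ and $d$, the two strands of $c'$ and the two strands of $d'$ cross in a $2\times 2$ grid, giving exactly four crossings; summing over all $k$ crossings yields $4k$ crossings between $c'$ and $d'$. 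Disjoint curves remain disjoint, and the U-turns introduce no intersections with other curves.

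After a small generic perturbation to remove any accidental triple points or coincidences of crossings with bends and endpoints, the family $R' = \{c' : c \in R\}$ becomes a proper $\precg{4k}$-string representation of $G$. The main obstacle is the careful geometric construction of the offset strands: they must remain pairwise parallel and self-intersection free even near bends of $c$, and the U-turn at an endpoint must fit inside a region avoiding all other curves. Both conditions are achievable by choosing $\varepsilon$ sufficiently small relative to the shortest segment of the piecewise linear curves, the minimum bend angle in $R$, and the minimum distance between non-incident curves and endpoints.
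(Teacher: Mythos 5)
Your proposal is correct and follows essentially the same approach as the paper: double each curve inside a thin tubular neighborhood chosen small enough that tubes of disjoint curves are disjoint and tubes of intersecting curves meet only near the original crossings, so that each of the $k$ crossings becomes exactly four. The only cosmetic difference is that you replace each curve by a hairpin of two offset strands (making the $2\times 2$ grid of crossings immediate), whereas the paper keeps the original curve and appends a parallel copy joined at one endpoint, arguing for the fourth crossing via the cyclic order of the strands on the boundary of the crossing neighborhood.
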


The proofs of these results follow from the operations depicted in 
Figure~\ref{fig:preciselystringchanges}, with Proposition~\ref{prop:preciselystringplustwo} using Figure~\ref{fig:preciselystringchangeplustwo} and Proposition~\ref{prop:preciselystringtimesfour} using Figure~\ref{fig:preciselystringchangetimesfour}. The full proofs appear \arxivorgd{in the appendix}{ in the full version~\ref{ChmelJelinekArXivVersion}}.

\begin{figure}
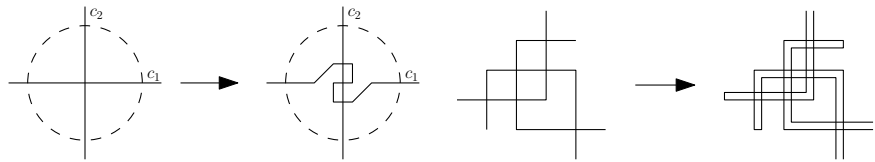

    \centering
    \begin{subfigure}[b]{0.45\linewidth}
        \centering
        \input{img/preciselystringplustwo.tex}
        \caption{Adding two intersections}
        \label{fig:preciselystringchangeplustwo}
    \end{subfigure}
    \hfil
    \begin{subfigure}[b]{0.45\linewidth}
        \centering
        \input{img/preciselystringtimesfour.tex}
        \caption{Quadrupling the intersections}
        \label{fig:preciselystringchangetimesfour}
    \end{subfigure}
    \caption{Two operations for increasing the number of intersection points}
    \label{fig:preciselystringchanges}
\end{figure}

We also note, for future reference, that for bipartite graphs, we may only double the representation 
of just one of the two partition classes.

\begin{proposition}\label{prop:preciselystringtimestwoforbipartite}
For all $k\ge 1$, every bipartite \precg{k}-string graph is also a \precg{2k}-string graph.
\end{proposition}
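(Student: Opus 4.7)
The plan is to adapt the quadrupling construction from Proposition~\ref{prop:preciselystringtimesfour} by applying it asymmetrically, to only one side of the bipartition. Let $G=(A\cup B,E)$ be a bipartite $\precg{k}$-string graph with a $\precg{k}$-string representation $R$. For each $a\in A$, I replace $R(a)$ with a single simple piecewise linear curve $R'(a)$ that closely parallels $R(a)$ on both sides and wraps around its endpoints, i.e.\ precisely the local transformation depicted in Figure~\ref{fig:preciselystringchangetimesfour}. For each $b\in B$, I keep the curve unchanged by setting $R'(b)=R(b)$.

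To verify that $R'$ is a $\precg{2k}$-string representation of $G$, I examine each pair of curves according to which side of the bipartition they come from. Two curves from $A$ are originally disjoint (since $G$ is bipartite) and remain disjoint provided the parallel offset of each doubled curve is chosen smaller than the minimum pairwise distance between the original $A$-curves. Two curves from $B$ are unchanged, hence still disjoint. For a mixed pair $a\in A$, $b\in B$: if $R(a)\cap R(b)=\emptyset$, then $R'(a)\cap R(b)=\emptyset$ by the same thinness argument; if $R(a)$ and $R(b)$ meet transversely in exactly $k$ points, then each such crossing is replaced by two crossings of $R(b)$ with $R'(a)$, one for each of the two parallel strands of $R'(a)$ near that point, yielding exactly $2k$ crossings. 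Adjacency and non-adjacency in $G$ are therefore faithfully reproduced, and every intersecting pair shares exactly $2k$ points.

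The main technical obstacle is the same as in Proposition~\ref{prop:preciselystringtimesfour}: one must realise $R'(a)$ as a single simple piecewise linear curve (rather than just two separate parallel strands), close off the wrap-around at the two endpoints of $R(a)$ without introducing self-intersections, and apply a small generic perturbation so that the resulting representation remains proper, i.e.\ no three curves pass through a single point and no crossing coincides with a bend or an endpoint. Since these steps are local and directly analogous to the full construction already used in Proposition~\ref{prop:preciselystringtimesfour}, no essentially new arguments are needed beyond the bookkeeping above.
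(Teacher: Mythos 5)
Your proof is correct and follows essentially the same route as the paper: the paper's own argument is precisely to repeat the doubling construction of Proposition~\ref{prop:preciselystringtimesfour} on the curves of a single partition class only, so that each of the $k$ crossings of a mixed pair is duplicated exactly once, yielding $2k$ intersections, while the pairs within each class remain disjoint. The bookkeeping you give (disjointness within each side, thinness of the offset, and the local count of two crossings per original crossing point) matches the paper's reasoning.
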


Note that Proposition~\ref{prop:preciselystringtimesfour} implies that any \precg{k}-string 
representation with $k$ odd can be transformed into a \precg{\ell}-string representation with $\ell$ 
even and large enough. However, there is no obvious way to proceed in the opposite direction, towards a 
representation with an odd number of crossings per pair of curves.
This motivates our next definition.
\begin{definition}[Odd-string graphs]
    An \emph{odd-string representation} of a graph $G$ is a proper string representation $R$ in 
which any two curves are either disjoint or cross in an odd number of points. A graph $G$ is an 
\emph{odd-string graph} if it has an odd-string representation. The class of odd-string graphs is 
denoted by \textsc{odd-string}.
\end{definition}
Of course, for any $k$ odd, \textsc{$\precg{k}$-string} is a subclass of \textsc{odd-string}. 
Conversely, given any odd-string representation $R$, we can repeatedly apply the operation from 
Figure~\ref{fig:preciselystringchangeplustwo} to transform $R$ into a \precg{k}-string 
representation for some $k$~odd. This shows that \textsc{odd-string} can equivalently be defined as 
the union $\bigcup_{\ell\in\bbN} \textsc{\precg{2\ell-1}-string}$.

\subsection{Hierarchy non-inclusions}

Our main results in this paper are negative results, which essentially show that there are no more 
inclusions between the classes we consider beyond those implied by 
Propositions~\ref{prop:preciselystringplustwo} and~\ref{prop:preciselystringtimesfour}. 
Specifically, we will prove the following results:
\begin{enumerate}
    \item for all $1\leq k<\ell$, the class of $\precg{\ell}$-string graphs is not a subclass of 
    $\precg{k}$-string graphs (see Theorem~\ref{thm:preciselykplusonenotink}),
    \item for all $k\geq 1$, the classes of $\precg{k}$-string graphs is not a subclass of $\precg{k+1}$-string graphs (see Theorem~\ref{thm:preciselyknotinkplusone}),
   \item for any odd $k$, the class of $\precg{k}$-string graphs is not a subclass of 
$\precg{4k-2}$-string graphs -- in particular, the bound $4k$ in 
Proposition~\ref{prop:preciselystringtimesfour} is best possible when it comes to transforming a 
\precg{k}-string representation with $k$ odd into a \precg{\ell}-string representation with $\ell$ 
even (see Theorem~\ref{thm-noteven4k}),
    \item there exist $\precg{2}$-string graphs with no odd-string representation; consequently,
\textsc{odd-string} is a proper subclass of \textsc{string} (see 
Theorem~\ref{thm:preciselynonoddstring}).
\end{enumerate}

Figure~\ref{fig:preciselystringinclusions} outlines the inclusion between the various 
\precg{k}-string classes. For reference, the figure also includes the classes \textsc{$k$-string}. 
We do not have a full characterisation of the inclusions between \textsc{$k$-string} and 
\textsc{\precg{\ell}-string}, although we will later show that \textsc{$k$-string} is a subclass 
of \textsc{\precg{8k}-string} (Theorem~\ref{thm:stringtopreciselykstring}).

\begin{figure}
    \centering
    \tikzstyle{ipe stylesheet} = [
  ipe import,
  even odd rule,
  line join=round,
  line cap=butt,
  ipe pen normal/.style={line width=0.4},
  ipe pen heavier/.style={line width=0.8},
  ipe pen fat/.style={line width=1.2},
  ipe pen ultrafat/.style={line width=2},
  ipe pen normal,
  ipe mark normal/.style={ipe mark scale=3},
  ipe mark large/.style={ipe mark scale=5},
  ipe mark small/.style={ipe mark scale=2},
  ipe mark tiny/.style={ipe mark scale=1.1},
  ipe mark normal,
  /pgf/arrow keys/.cd,
  ipe arrow normal/.style={scale=7},
  ipe arrow large/.style={scale=10},
  ipe arrow small/.style={scale=5},
  ipe arrow tiny/.style={scale=3},
  ipe arrow normal,
  /tikz/.cd,
  ipe arrows, 
  <->/.tip = ipe normal,
  ipe dash normal/.style={dash pattern=},
  ipe dash dotted/.style={dash pattern=on 1bp off 3bp},
  ipe dash dashed/.style={dash pattern=on 4bp off 4bp},
  ipe dash dash dotted/.style={dash pattern=on 4bp off 2bp on 1bp off 2bp},
  ipe dash dash dot dotted/.style={dash pattern=on 4bp off 2bp on 1bp off 2bp on 1bp off 2bp},
  ipe dash normal,
  ipe node/.append style={font=\normalsize},
  ipe stretch normal/.style={ipe node stretch=1},
  ipe stretch normal,
  ipe opacity 10/.style={opacity=0.1},
  ipe opacity 30/.style={opacity=0.3},
  ipe opacity 50/.style={opacity=0.5},
  ipe opacity 75/.style={opacity=0.75},
  ipe opacity opaque/.style={opacity=1},
  ipe opacity opaque,
]
\definecolor{red}{rgb}{1,0,0}
\definecolor{blue}{rgb}{0,0,1}
\definecolor{green}{rgb}{0,1,0}
\definecolor{yellow}{rgb}{1,1,0}
\definecolor{orange}{rgb}{1,0.647,0}
\definecolor{gold}{rgb}{1,0.843,0}
\definecolor{purple}{rgb}{0.627,0.125,0.941}
\definecolor{gray}{rgb}{0.745,0.745,0.745}
\definecolor{brown}{rgb}{0.647,0.165,0.165}
\definecolor{navy}{rgb}{0,0,0.502}
\definecolor{pink}{rgb}{1,0.753,0.796}
\definecolor{seagreen}{rgb}{0.18,0.545,0.341}
\definecolor{turquoise}{rgb}{0.251,0.878,0.816}
\definecolor{violet}{rgb}{0.933,0.51,0.933}
\definecolor{darkblue}{rgb}{0,0,0.545}
\definecolor{darkcyan}{rgb}{0,0.545,0.545}
\definecolor{darkgray}{rgb}{0.663,0.663,0.663}
\definecolor{darkgreen}{rgb}{0,0.392,0}
\definecolor{darkmagenta}{rgb}{0.545,0,0.545}
\definecolor{darkorange}{rgb}{1,0.549,0}
\definecolor{darkred}{rgb}{0.545,0,0}
\definecolor{lightblue}{rgb}{0.678,0.847,0.902}
\definecolor{lightcyan}{rgb}{0.878,1,1}
\definecolor{lightgray}{rgb}{0.827,0.827,0.827}
\definecolor{lightgreen}{rgb}{0.565,0.933,0.565}
\definecolor{lightyellow}{rgb}{1,1,0.878}
\definecolor{black}{rgb}{0,0,0}
\definecolor{white}{rgb}{1,1,1}
\begin{tikzpicture}[ipe stylesheet,scale=0.7, every node/.style={scale=0.7}]
  \draw
    (272, 816) rectangle (336, 784);
  \node[ipe node, anchor=center, font=\Large]
     at (304, 800) {\textsc{string}};
  \node[ipe node, anchor=center, font=\Large]
     at (304, 432) {(=1)\textsc{-string}};
  \node[ipe node, anchor=center, font=\Large]
     at (304, 528) {(=3)\textsc{-string}};
  \draw
    (304, 448)
     -- (304, 512);
  \draw
    (128, 608) rectangle (192, 576);
  \node[ipe node, anchor=center, font=\Large]
     at (160, 592) {2\textsc{-string}};
  \draw
    (128, 672) rectangle (192, 640);
  \node[ipe node, anchor=center, font=\Large]
     at (160, 656) {3\textsc{-string}};
  \draw
    (160, 608)
     -- (160, 640);
  \draw
    (128, 544) rectangle (192, 512);
  \node[ipe node, anchor=center, font=\Large]
     at (160, 528) {1\textsc{-string}};
  \draw
    (160, 544)
     -- (160, 576);
  \draw
    (128, 736) rectangle (192, 704);
  \node[ipe node, anchor=center, font=\Large]
     at (160, 720) {4\textsc{-string}};
  \draw
    (160, 672)
     -- (160, 704);
  \draw
    (160, 512)
     -- (304, 448);
  \draw
    (304, 544)
     -- (160, 640);
  \draw[ipe dash dashed]
    (160, 736)
     -- (160, 768)
     -- (304, 784);
  \draw
    (256, 544) rectangle (352, 512);
  \draw
    (256, 448) rectangle (352, 416);
  \node[ipe node, anchor=center, font=\Large]
     at (304, 752) {\textsc{odd-string}};
  \draw
    (256, 768) rectangle (352, 736);
  \node[ipe node, anchor=center, font=\Large]
     at (464, 720) {(=12)\textsc{-string}};
  \draw
    (512, 704) rectangle (416, 736);
  \node[ipe node, anchor=center, font=\Large]
     at (464, 624) {(=8)\textsc{-string}};
  \draw
    (416, 640) rectangle (512, 608);
  \node[ipe node, anchor=center, font=\Large]
     at (464, 528) {(=4)\textsc{-string}};
  \draw
    (512, 512) rectangle (416, 544);
  \node[ipe node, anchor=center, font=\Large]
     at (464, 432) {(=2)\textsc{-string}};
  \draw
    (416, 448) rectangle (512, 416);
  \draw
    (464, 448)
     -- (256, 496)
     -- (160, 576);
  \draw
    (464, 544)
     -- (160, 704);
  \draw
    (464, 704)
     -- (304, 544);
  \draw
    (464, 608)
     -- (160, 544);
  \draw[ipe dash dashed]
    (464, 640)
     -- (464, 704);
  \draw[ipe dash dashed]
    (464, 544)
     -- (464, 608);
  \draw
    (304, 448)
     -- (464, 512);
  \draw
    (464, 448)
     -- (464, 512);
  \draw[ipe dash dashed]
    (464, 736)
     -- (464, 768)
     -- (304, 784);
  \draw[ipe dash dashed]
    (304, 544)
     -- (304, 736);
  \draw[ipe dash dashed]
    (304, 768)
     -- (304, 784);
\end{tikzpicture}
    \caption{The diagram of inclusions of the investigated classes. Note that $k$-string 
representations are not assumed to be proper.}
    \label{fig:preciselystringinclusions}
\end{figure}
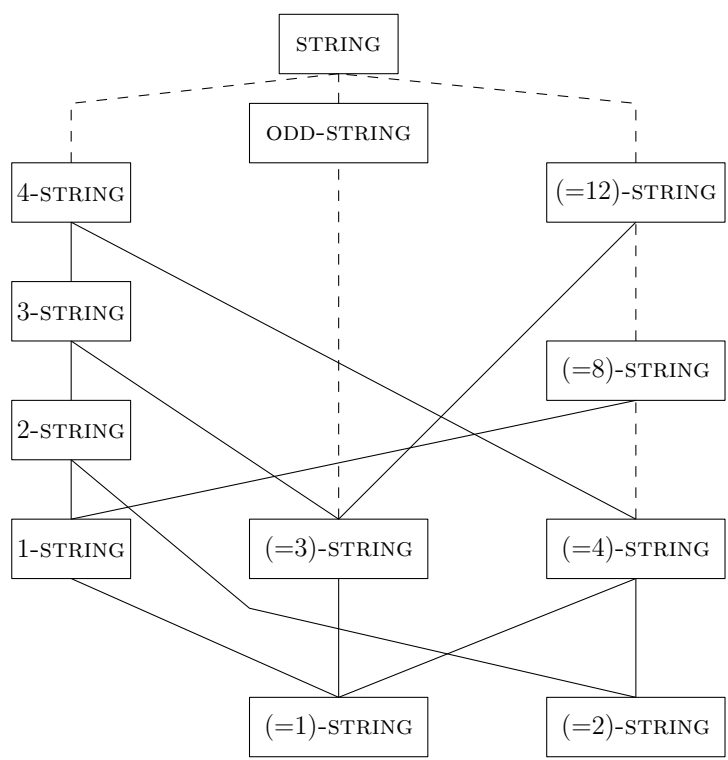

\subsubsection{Noodle-Forcing Lemma}

Our main tool to prove the non-inclusions in our hierarchy is the Noodle-Forcing Lemma of Chaplick et 
al. \cite{ChaplickEtAlNoodles}, originally devised to prove non-inclusions in the B$_k$-VPG 
hierarchy. We will show that the lemma can be adapted to the setting of $\precg{k}$-string 
representations. We begin by stating a simplified version of the lemma.

\begin{lemma}[Noodle-Forcing Lemma, Chaplick et al. \cite{ChaplickEtAlNoodles}]\label{lem:noodle}
    Let $G=(V,E)$ be a graph with a proper string representation $R=\{R(v)\colon v\in V\}$.
    Then there is a graph $G^\#(R)=(V^\#,E^\#)$ containing $G$ as an induced subgraph that has a 
proper string
representation $R^\#=\{R^\#(v)\colon v\in V^\#\}$ such that $R(v)=R^\#(v)$ for all $v\in V$ and $R^\#(w)$ is 
a vertical or a horizontal segment for $w\in V^\#\setminus V$.

    Moreover, for any $\varepsilon>0$ any (not necessarily proper) string representation of $G^\#$ 
can be 
transformed by a homeomorphism of the plane and a circular inversion into a representation 
$R^\varepsilon=\{R^\varepsilon(v)\colon v\in V^\#\}$ such that for every vertex $v\in V$,
the curve $R^\varepsilon(v)$ is contained in the $\varepsilon-$neighborhood of $R(v)$ and
$R(v)$ is contained in the $\varepsilon-$neighborhood of $R^\varepsilon(v)$.
\end{lemma}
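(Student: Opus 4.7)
Given the proper string representation $R$ of $G$ and the target $\varepsilon>0$, my plan is to overlay a fine axis-parallel grid $\Gamma$ on a bounded window that contains the drawing, with mesh size $\delta \ll \varepsilon$ chosen so that no intersection point, endpoint, or bend of any curve of $R$ lies on $\Gamma$ and so that every grid cell is met by at most one curve of $R$, in at most one short subarc. I would take $V^\#\setminus V$ to consist of one new vertex for each maximal horizontal or vertical segment of $\Gamma$ (plus an enclosing "frame" of long segments fencing the drawing off from the outer face), and declare two vertices adjacent in $G^\#$ exactly when the corresponding curves in $R\cup\Gamma$ cross. By construction, $R^\#:=R\cup\Gamma$ is a proper string representation of $G^\#$, extends $R$ on $V$, and represents each new vertex by a horizontal or vertical segment, as required.

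\textbf{Forcing the shape of an arbitrary representation.} Now let $S=\{S(v)\colon v\in V^\#\}$ be any string representation of $G^\#$. The restriction of $S$ to the grid vertices is a string realisation of a heavily constrained subgraph: each grid curve is required to cross precisely the prescribed collection of other grid curves and no others. The combinatorial heart of the argument is to show that this adjacency pattern, together with the "frame" vertices, forces a unique combinatorial embedding of the grid substructure on the sphere, up to the choice of which face is the outer one. I would first lift $S$ to the sphere, apply a circular inversion so that the outer face matches that of $R^\#$, and then choose a plane homeomorphism that straightens the grid curves of $S$ back onto arbitrarily small tubular neighbourhoods of the segments of $\Gamma$. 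Finally, for each $v\in V$, the prescribed pattern of crossings between $S(v)$ and the grid vertices forces $S(v)$ to traverse exactly the same sequence of grid cells as $R(v)$; since each cell has diameter below $\delta$, the resulting curve $R^\varepsilon(v):=S(v)$ lies in the $\varepsilon$-neighbourhood of $R(v)$ and conversely.

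\textbf{Main obstacle.} The delicate point is the rigidity statement for the grid subrepresentation. String representations allow wild curves, so a priori the images of the grid vertices under $S$ need not resemble a planar grid at all. The plan is to exploit the rich crossing pattern of $\Gamma$ to build an auxiliary planar graph — whose vertices correspond to crossings of grid curves and whose edges correspond to the subarcs between consecutive crossings — that is $3$-connected, so that Whitney's theorem pins down its combinatorial embedding on the sphere up to a single reflection, which is exactly what the circular inversion absorbs. Calibrating the grid to be simultaneously dense enough to force metric $\varepsilon$-closeness after the above straightening, and structured enough to yield the desired $3$-connected auxiliary graph, is the bulk of the work, and I would follow the strategy of Chaplick et al.~\cite{ChaplickEtAlNoodles}.
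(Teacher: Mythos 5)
Your overall strategy (overlay a fine rectilinear mesh, then argue rigidity) is the right one, but the specific construction you propose --- one new vertex per \emph{maximal} horizontal or vertical grid segment --- is not the one of Chaplick et al., and the difference is fatal. The graph $G^\#$ records only \emph{whether} two curves intersect, not where or how many times. If the new vertices are full-length grid lines, then an original curve $R(v)$ is adjacent to essentially all horizontal and vertical lines meeting its bounding box and nothing more: two curves of $R$ with the same bounding box but completely different shapes induce the same adjacencies to the grid vertices. Hence an arbitrary representation $S$ of $G^\#$ is free to route $S(v)$ anywhere that touches the right set of lines, and no homeomorphism can force it into the $\varepsilon$-neighborhood of $R(v)$. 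The rigidity step fails for the same reason: the induced subgraph of $G^\#$ on your grid vertices is essentially a complete bipartite graph (every horizontal line crosses every vertical line), which is far from having a unique string realization; moreover, your auxiliary ``crossing graph'' is defined from a particular drawing rather than from the abstract graph $G^\#$, so Whitney's theorem cannot be applied until one has already shown that $S$ realizes the same crossing graph --- which is precisely the difficulty to be overcome.

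The construction actually used (sketched in the paper's appendix, following Chaplick et al.) subdivides the mesh into \emph{short, localized} segments: each vertex $v$ of the overlay grid $H$ becomes a small cross of two segments $S_1(v),S_2(v)$, and each edge $e=\{u,v\}$ of $H$ becomes a chain of three short segments $S(v,e),S(e),S(u,e)$, with the mesh refined (properties (P1)--(P6)) so that each original curve crosses only the middle segments $S(e)$ of the mesh edges it actually passes through, and each such $S(e)$ meets at most one original curve. Because every gadget is adjacent only to its neighbours in the mesh and to the at most one curve passing through that particular mesh edge, the adjacency data of $G^\#$ encodes, edge by edge, the exact corridor of cells traversed by each $R(v)$. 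This locality is what makes the forcing argument go through, and it is exactly what your full-line version discards.
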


\begin{figure}
    \centering
    \input{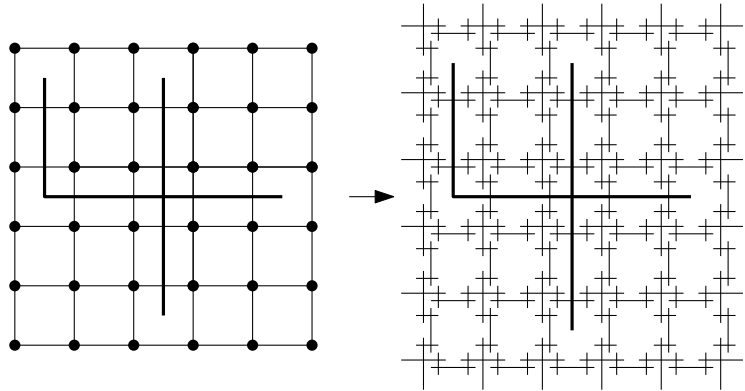}
    \caption{An example of the construction in the proof of Noodle-Forcing Lemma}
    \label{fig:noodleexample}
\end{figure}

The main idea of the proof of the Noodle-Forcing Lemma is to overlay the representation $R$ with a 
sufficiently fine grid-like mesh of horizontal and vertical segments, as shown in 
Figure~\ref{fig:noodleexample}. The relevant details of the construction are presented \arxivorgd{in the appendix}{in the full version~\cite{ChmelJelinekArXivVersion}}. Henceforth, we will use the notation $G^\#(R)$ for the specific graph created by the construction illustrated in Figure~\ref{fig:noodleexample}. 

Importantly, we can show that if the Noodle-Forcing Lemma is applied to a $\precg{k}$-string 
representation $R$, then the graph $G^\#(R)$ is also in $\precg{k}$-string.

\begin{lemma}\label{lem:preciserepresentationandnoodles}
    For all $k\geq 1$, given a $\precg{k}$-string representation $R$ of a graph $G$, the graph 
$G^\#(R)$ has a $\precg{k}$-string representation as well.
\end{lemma}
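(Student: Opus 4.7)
My plan is to build a $\precg{k}$-string representation $R^*$ of $G^\#(R)$ by locally modifying the noodle-forcing representation $R^\#$ obtained from Lemma~\ref{lem:noodle}. First I set $R^*(v)=R(v)$ for every $v\in V$; since $R$ is a $\precg{k}$-string representation of $G$, these curves already satisfy the required crossing property pairwise. Because the Noodle-Forcing Lemma guarantees that $R^\#(v)=R(v)$ for all $v\in V$, it remains only to handle pairs of adjacent curves in $G^\#(R)$ involving at least one grid segment.

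For each such pair, I plan to locally modify one of the curves near each existing crossing by applying the operation of Figure~\ref{fig:preciselystringchangeplustwo}, which replaces one crossing by three while leaving every other pair of curves unaffected. Because $R^\#$ is proper, small enough neighborhoods around distinct crossings are pairwise disjoint and contain no other curves, so modifications at different crossings do not interfere with one another. Iterating the $+2$ operation at each crossing of a chosen adjacent pair, I can bring its crossing count to any value that matches the parity of the current count.

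The main obstacle is precisely this parity issue: the $+2$ operation preserves the parity of the number of crossings. In the standard noodle-forcing representation, two perpendicular grid segments meet in a single point, which has the right parity for odd $k$ but the wrong parity for even $k$. To handle even $k$, I plan to adjust the noodle-forcing construction so that each pair of adjacent grid segments crosses an even number of times; a concrete way is to replace each grid segment by a thin doubled version (two very close parallel copies joined at the endpoints by tiny perpendicular pieces), which makes two perpendicular doubled grid segments cross in exactly two points while the graph $G^\#(R)$ is unchanged. A similar parity adjustment has to be carried out for pairs consisting of a grid segment and an original curve: the grid resolution and placement must be chosen so that each such intersecting pair already has a crossing count with the parity of $k$, after which the $+2$ operation brings the count to exactly $k$. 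Verifying that all these local adjustments can be performed consistently, without upgrading a non-edge into an edge, is the technically delicate part, but it reduces to choosing the modifications inside pairwise disjoint small neighborhoods of the crossings of $R^\#$.
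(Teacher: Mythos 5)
Your approach works for odd $k$: there every pair involving a grid segment crosses exactly once in $R^\#$, which has the right parity, and iterating the $+2$ operation of Figure~\ref{fig:preciselystringchangeplustwo} inside disjoint neighbourhoods brings each count up to $k$. The gap is in the even case, and your proposed fix does not close it. If you replace \emph{every} grid segment by a thin doubled loop, then two adjacent doubled segments that cross transversally meet in $2\times 2=4$ points, not in ``exactly two points'' as you claim; each copy of one crosses each copy of the other. The parity is then right for even $k\ge 4$, but for $k=2$ you are stuck at $4$ crossings with no way down, since the $+2$ operation only adds. To get exactly two crossings per adjacent pair you would have to double only one side of each intersecting pair (as in Proposition~\ref{prop:preciselystringtimestwoforbipartite}), which requires checking that the auxiliary curves admit a suitable $2$-colouring and also handling the grid-segment-versus-original-curve pairs, where the construction gives exactly one crossing and no choice of ``grid resolution and placement'' can change that parity while each grid edge still meets a curve of $R$ at most once.

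The idea you are missing, and the one the paper uses, is that the $+2$ operation is not the only available local modification: every new curve of $G^\#(R)$ is a segment with free endpoints, and by \emph{extending} such a segment past an endpoint and weaving it back and forth across the target curve you can add any number of crossings, one at a time, with no parity constraint. The paper extends $S(v,e)$ at its two endpoints to add $k-1$ crossings with $S(e)$ and with $S_i(v)$ respectively, extends $S_1(v)$ to add $k-1$ crossings with $S_2(v)$, and extends $S(e)$ to add $k-1$ crossings with the original curve it meets; the geometry of the mesh guarantees these extensions stay clear of all other curves. This handles odd and even $k$ uniformly and never touches the original curves. I would recommend either adopting the endpoint-extension idea or, if you want to keep the doubling route, carrying out the bipartite single-sided doubling carefully, including the case $k=2$.
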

\begin{figure}
    \centering
    \input{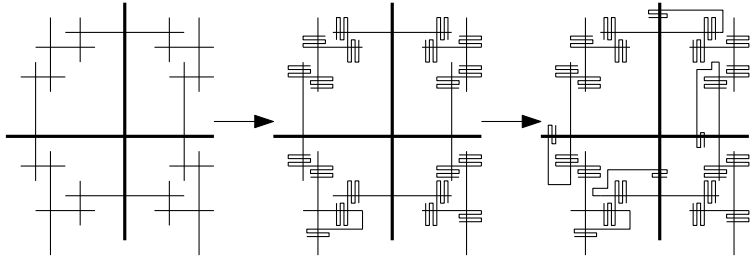}
    \caption{The extension of the representation $R^\#$ into a $\precg{k}$-string one in two steps}
    \label{fig:noodleextension}
\end{figure}
We postpone the proof \arxivorgd{to the appendix}{to the full version~\cite{ChmelJelinekArXivVersion}}; the idea is shown in Figure~\ref{fig:noodleextension}.

\subsubsection{Proofs of the non-inclusions}\label{sec:noninclusions}

Proofs involving the Noodle-Forcing Lemma usually follow the same pattern. The goal is typically to 
find a graph that belongs to one class, say \textsc{\precg{\ell}-string}, but not to another class, 
say \textsc{\precg{k}-string}. We thus begin by choosing a suitable gadget $G$ with an  
\precg{\ell}-string representation $R$ and apply the lemma to obtain the graph $G^\#=G^\#(R)$, which 
by Lemma~\ref{lem:preciserepresentationandnoodles} also belongs to \textsc{\precg{\ell}-string}. 
The main part is then to show that $G^\#$ is not in \textsc{\precg{k}-string}. This part of the 
argument will depend on the particular structure of $G$ and $R$, but there are certain common
ideas which we now present.

For contradiction, we assume that a $\precg{k}$-string representation $R'$ of $G^\#$ exists. By the 
Noodle-Forcing Lemma, we transform $R'$ into a representation $R^\varepsilon$ of $G^\#$ with 
properties as in the statement of the lemma. Moreover, $R^{\varepsilon}$ is also a $\precg{k}$-string 
representation as homeomorphisms and circular inversions preserve intersections between curves.

For brevity, we say that we \emph{precook} a $\precg{k}$-string representation $R^{\varepsilon}$ from $R'$ whenever we apply the preceding procedure with $\varepsilon>0$ small enough so that the properties in the following paragraphs hold.

The curve $R^{\varepsilon}(v)$ is $\varepsilon$-close to $R(v)$, i.e.,  
$R^{\varepsilon}(v)$ is confined into the set $N_\varepsilon(v):=\{x\colon \exists y\in 
R(v)\colon\mathrm{dist}(x,y)<\varepsilon\}$.
Following Chaplick et~al.~\cite{ChaplickEtAlNoodles}, we call the 
set $N_\varepsilon(v)$ the \emph{noodle} of~$v$. For small enough $\varepsilon$, each noodle is a 
simply connected region, and if two curves $R(u)$ and $R(v)$ intersect in $\ell$ points $p_1, 
p_2, \dotsc, p_\ell$, then $N_\varepsilon(u)$ and $N_\varepsilon(v)$ intersect in $\ell$ pairwise 
disjoint parallelograms $Z_1,\dotsc, Z_\ell$, where $Z_i$ contains $p_i$ (recall that in a proper 
representation, no crossing point may coincide with a bend of its curve). We call the sets 
$Z_1,\dotsc,Z_\ell$ the \emph{zones} of $N_\varepsilon(u)\cap N_\varepsilon(v)$.

We also assume that $\varepsilon$ is small enough so that the distance between any intersection point 
in $R$ and any endpoint of a curve in $R$ is strictly larger than $\varepsilon>0$. This is possible 
since $R$ is proper.
    
Consider now the intersection of the curve $R^\varepsilon(v)$ with a zone $Z_i$: its connected 
components shall be called the \emph{fragments} of $v$ in $Z_i$; see Figure~\ref{fig:zone}. Each 
fragment is a curve which either connects two opposite sides of $Z_i$ (we call such fragment a 
\emph{$v$-traversal} through $Z_i$) or has both endpoints on the same side of $Z_i$ (such a fragment 
is called a \emph{$v$-reversal}). Choosing $\varepsilon$ small enough, we may ensure that each $Z_i$ 
has at least one $v$-traversal, since $R^\varepsilon(v)$ must reach a point $\varepsilon$-close to 
each endpoint of $R(v)$, and the endpoints of $R(v)$ can be made at least $\varepsilon$-far from any 
zone.

\begin{figure}
    \centering
    \includegraphics[scale=0.95]{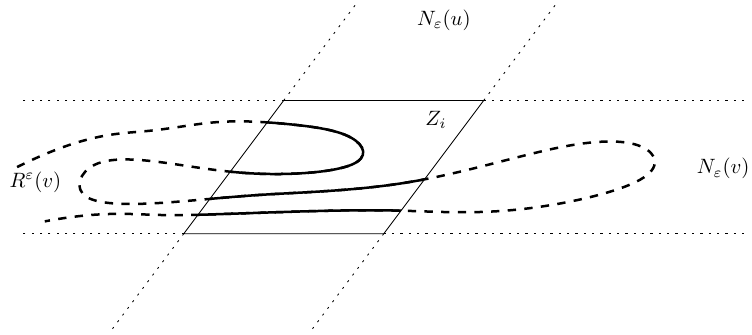}
    \caption{An example of a zone $Z_i$ at the intersection of two noodles $N_\varepsilon(u)$ and  
$N_\varepsilon(v)$, in which the curve $R^\varepsilon(v)$ forms one $v$-reversal and two 
$v$-traversals.}
    \label{fig:zone}
\end{figure}

Note that any $v$-traversal partitions its zone into two parts, separating the endpoints of  
any $u$-traversal of that zone from each other. Thus, any $v$-traversal must intersect any 
$u$-traversal of the same zone, and in fact they must intersect in an odd number of points. In 
particular, each zone has at least one intersection between two traversals. Moreover, any 
$v$-reversal in $Z_i$ forms a closed loop together with the segment on the boundary of $Z_i$ 
connecting its endpoints, and any $u$-fragment has both endpoints outside this loop. It follows that 
any $v$-reversal is intersected an even number of times (possibly zero) by any $u$-fragment.

As an example of this pattern, we show that there exists a 
$\precg{k+1}$-string graph that is not a $\precg{k}$-string graph, in fact it is not even a 
$k$-string graph. We note that this implies that the inclusions in Propositions 
\ref{prop:preciselystringplustwo} and \ref{prop:preciselystringtimesfour} are strict.

\begin{theorem}\label{thm:preciselykplusonenotink}
    For all $k\geq 1$, \textsc{$\precg{k+1}$-string} is not a subclass of \textsc{$k$-string},
    and therefore it is not a subclass of \textsc{$\precg{\ell}$-string} for any $1\leq \ell\leq k$ either.
\end{theorem}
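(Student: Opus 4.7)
The plan is to apply the Noodle-Forcing framework using the simplest possible gadget: two curves crossing exactly $k+1$ times. Let $G$ be the edge $K_2$ on vertices $u,v$, and let $R$ be the $\precg{k+1}$-string representation of $G$ consisting of two piecewise linear curves crossing in exactly $k+1$ points. Set $G^\# = G^\#(R)$. By Lemma~\ref{lem:preciserepresentationandnoodles}, $G^\#$ has a $\precg{k+1}$-string representation, so $G^\# \in \textsc{\precg{k+1}-string}$.

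Next I would show $G^\# \notin \textsc{$k$-string}$ by contradiction. Assume $R'$ is a $k$-string representation of $G^\#$, and precook from $R'$ a representation $R^\varepsilon$ for $\varepsilon > 0$ sufficiently small, as described in the excerpt. Since homeomorphisms and circular inversions preserve intersections, $R^\varepsilon$ is also a $k$-string representation. The two noodles $N_\varepsilon(u)$ and $N_\varepsilon(v)$ meet in exactly $k+1$ pairwise disjoint zones $Z_1,\dots,Z_{k+1}$, one around each crossing of $R(u)$ and $R(v)$, and $R^\varepsilon(u) \subseteq N_\varepsilon(u)$, $R^\varepsilon(v) \subseteq N_\varepsilon(v)$.

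The key observation, already provided by the framework recalled in the excerpt, is that for small enough $\varepsilon$ each zone $Z_i$ contains at least one $u$-traversal and at least one $v$-traversal, and any $u$-traversal must cross any $v$-traversal of $Z_i$ in an odd, hence positive, number of points. Consequently each zone $Z_i$ contributes at least one intersection point between $R^\varepsilon(u)$ and $R^\varepsilon(v)$, and since the zones are pairwise disjoint these intersections are distinct. This gives at least $k+1$ intersections between $R^\varepsilon(u)$ and $R^\varepsilon(v)$, contradicting the fact that $R^\varepsilon$ is a $k$-string representation.

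The second part of the statement is immediate: since $\textsc{\precg{\ell}-string} \subseteq \textsc{$\ell$-string} \subseteq \textsc{$k$-string}$ for all $1 \leq \ell \leq k$, if $G^\#$ were a \precg{\ell}-string graph for some such $\ell$, it would also be a $k$-string graph, contradicting what we just proved. I do not anticipate any real obstacle here: the argument is a textbook application of Noodle-Forcing, with the only care needed being that $\varepsilon$ is chosen small enough that endpoints of $R(u), R(v)$ stay $\varepsilon$-far from all zones, ensuring each zone is traversed by $R^\varepsilon(u)$ and $R^\varepsilon(v)$.
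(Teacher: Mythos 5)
Your proposal is correct and follows essentially the same route as the paper: the same $K_2$ gadget with $k+1$ crossings, the application of Lemma~\ref{lem:preciserepresentationandnoodles} to keep $G^\#$ in \textsc{$\precg{k+1}$-string}, and the zone-counting argument (one $u$-traversal crossing one $v$-traversal per zone) to rule out a $k$-string representation. The derivation of the second part from $\textsc{$\precg{\ell}$-string}\subseteq\textsc{$k$-string}$ for $\ell\le k$ also matches the paper.
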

\begin{proof}
    We construct a representation $R_{k+1}$ of a graph $G=K_2$, the complete graph on two vertices.
    We note that the construction is the same as in the proof of 
$\mathrm{B}_k\mathrm{-VPG}\subsetneq\mathrm{B}_{k+1}\mathrm{-VPG}$ by Chaplick 
et~al.~\cite{ChaplickEtAlNoodles}.
    As shown in Figure~\ref{fig:sausage}, the representation is $\precg{k+1}$-string, and we apply 
the Noodle-Forcing Lemma on the representation to obtain a graph $G^\#$ with representation 
$R_{k+1}^\#$.
    By Lemma~\ref{lem:preciserepresentationandnoodles}, the graph $G^\#$ has a $\precg{k+1}$-string 
representation.

    \begin{figure}
        \centering
        \input{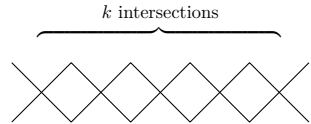}
        \caption{The sausage construction of representation $R_{k}$}
        \label{fig:sausage}
    \end{figure}

    We now claim that the graph $G^\#$ has no $k$-string representation.
    For contradiction, let $R'$ be a $k$-string representation of $G^\#$.
    We precook $R'$ into a $k$-string representation $R^\varepsilon$.

    Let $u,v$ be the two vertices of~$G$. Their two noodles $N(u)$ and $N(v)$ have $k+1$ zones in 
their intersection, and as observed before, each zone $Z_i$ must have at least one $u$-traversal and 
one $v$-traversal, which must intersect in at least one point.
    Therefore, there are at least $k+1$ intersection points between $R^\varepsilon(u)$ and 
$R^\varepsilon(v)$, contradicting the assumption that $R^\varepsilon$ is a 
$k$-string representation.

    The second part of the statement follows immediately, as any $\precg{\ell}$-string representation
of the graph $G^\#$ for $1\leq \ell\leq k$ is also a $k$-string representation.
\end{proof}

Our next goal is to show non-inclusion in the opposite direction and construct a $\precg{k}$-string 
graph that does not have a $\precg{k+1}$-string representation. The key role in our argument is 
played by the concept of \emph{faithful extension} of a string representation~$R$. Informally, a 
faithful extension of $R$ is obtained by extending each string $R(v)$ of $R$ by 
attaching two new (possibly empty) parts to it, each starting from an endpoint of the original 
string $R(v)$ and following it very closely, with the effect of ``doubling'' some number 
of intersections which appear along~$R(v)$ near its endpoints; see 
Figure~\ref{fig:suitableextensionexample}. The fully formal definition of faithful extension is 
rather technical, and is presented in the \arxivorgd{appendix}{full 
version~\cite{ChmelJelinekArXivVersion}}.

To construct a $\precg{k}$-string 
graph that does not have a $\precg{k+1}$-string representation, we proceed in two steps.
First, we show that given a $\precg{k}$-string representation $R$ of a graph $G$, the
$\precg{k}$-string graph $G^\#=G^\#(R)$ has a $\precg{k+1}$-string representation if and only 
if the representation $R$ can be faithfully extended into a $\precg{k+1}$-string representation; see 
Lemma~\ref{lem:extendingpreciserepresentations}.
In the second step, we find a graph with a $\precg{k}$-string representation that 
cannot be faithfully extended into a $\precg{k+1}$-string representation; see 
Lemma~\ref{lem:nonextendablepreciserepresentations}.

\begin{figure}
    \centering
    \input{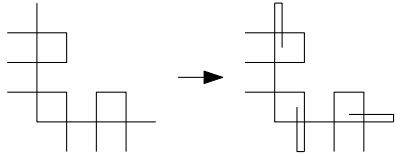}
    \caption{An example of a faithful extension of a $\precg{2}$-string representation into a 
$\precg{3}$-string representation}
    \label{fig:suitableextensionexample}
\end{figure}

\begin{lemma}\label{lem:extendingpreciserepresentations}
    For all $k\geq 1$, given a $\precg{k}$-string representation $R$ of a graph $G$, the 
graph $G^\#=G^\#(R)$ has these two properties:
\begin{enumerate}
\item $G^\#$ has a $\precg{k}$-string representation, and 
\item $G^\#$ has a $\precg{k+1}$-string representation if and only if the representation 
$R$ can be faithfully extended into a $\precg{k+1}$-string representation $R^+$ of~$G$.
\end{enumerate}
\end{lemma}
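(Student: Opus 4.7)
The first item is immediate from Lemma~\ref{lem:preciserepresentationandnoodles}, so all the work goes into the equivalence in item~(2); I would prove the two implications separately, with the forward direction being the main challenge.

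For the $(\Leftarrow)$ direction, suppose $R^+$ is a faithful $\precg{k+1}$-string extension of $R$. My plan is to re-run the construction behind Lemma~\ref{lem:preciserepresentationandnoodles} with $R^+$ substituted for $R$, while keeping the combinatorics of $G^\#(R)$ (which was designed around $R$). Because every extension in $R^+$ sits inside a thin tube around $R(v)$, the only mesh segments it can meet are the ones already met by $R(v)$, so the induced adjacency graph is still $G^\#(R)$. The decoration gadgets attached to each mesh segment (as in Figure~\ref{fig:noodleextension}) are then chosen so that every mesh/mesh pair and every mesh/$R^+$ pair of curves crosses exactly $k+1$ times, with a small local adjustment near each endpoint of $R$ to absorb the extra crossings contributed by the extensions. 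Since the pair-crossing counts between curves in $V$ are already $k+1$ by assumption, this yields a $\precg{k+1}$-string representation of $G^\#(R)$.

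For the $(\Rightarrow)$ direction, assume $G^\#$ admits a $\precg{k+1}$-string representation $R'$, and precook it into $R^\varepsilon$. For each edge $uv$ of $G$, the $k+1$ crossings of $R^\varepsilon(u)$ and $R^\varepsilon(v)$ are distributed among the $k$ zones $Z_1,\dots,Z_k$ of $N_\varepsilon(u)\cap N_\varepsilon(v)$; since every zone already forces at least one crossing by the traversal-meets-traversal argument recalled in Section~\ref{sec:noninclusions}, exactly one distinguished zone $Z_{uv}$ carries two crossings and the remaining $k-1$ zones carry one crossing each. The main obstacle, and the heart of the argument, is to show that $Z_{uv}$ must be a zone adjacent to an endpoint of $R(u)$ or of $R(v)$. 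I would establish this by exploiting the rigidity of the mesh of $G^\#$: for an interior zone, every mesh segment sitting immediately around the zone that is non-adjacent to $u$ (respectively $v$) in $G^\#$ acts as a barrier that a second traversal or a reversal of $R^\varepsilon(u)$ (resp. $R^\varepsilon(v)$) would be forced to cross, contradicting the specified adjacencies. Hence every interior zone hosts exactly one crossing, and $Z_{uv}$ must be an endpoint zone.

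Once that structural dichotomy is established, I would extract the faithful extension $R^+$ of $R$ from $R^\varepsilon|_V$ by applying, inside each noodle $N_\varepsilon(v)$, a homeomorphism fixing its boundary that straightens the middle portion of $R^\varepsilon(v)$ onto the corresponding portion of $R(v)$, leaving only the short pieces near the endpoints of $R(v)$ untouched. These leftover pieces are exactly the endpoint extensions allowed by a faithful extension, and because the deformation preserves all crossings, the resulting $R^+$ is a $\precg{k+1}$-string faithful extension of $R$, as required. The interior/endpoint dichotomy is the main technical hurdle; once it is in place, the rest is a clean topological deformation.
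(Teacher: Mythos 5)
Your item~(1) and your $(\Leftarrow)$ direction are fine and essentially match the paper: replace each $G$-string in $R^\#$ by its faithful extension drawn inside a thin tube (so no new adjacencies arise), and pad the mesh-mesh and mesh-$G$ pairs up to exactly $k+1$ crossings as in Lemma~\ref{lem:preciserepresentationandnoodles}. The setup of your $(\Rightarrow)$ direction is also correct: after precooking, each of the $k$ zones of a pair $u,v$ forces at least one crossing, so exactly one zone carries two, and a parity count shows that in this zone exactly one of the two curves makes two traversals (so each intersection is ``duplicated'' by exactly one of $u,v$).

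The gap is in your key structural claim, which is both false and not the statement you actually need. You assert that the doubled zone $Z_{uv}$ must be ``adjacent to an endpoint of $R(u)$ or of $R(v)$,'' i.e.\ that every interior zone hosts exactly one crossing. But a faithful extension is allowed to duplicate the first $a$ and last $b$ intersections along $R(v)$ for any $a,b$; taking $a=3$ (with the three duplicated intersections belonging to three distinct neighbours, so that each pair still ends up with exactly $k+1$ crossings) produces, via the $(\Leftarrow)$ direction, a legitimate $\precg{k+1}$-representation of $G^\#$ in which the third zone along $R(v)$ --- an interior zone by any reading --- is doubled. What must actually be proved is a global statement about each curve $v$: the set of zones along $R(v)$ (over \emph{all} neighbours, in the linear order in which they occur on $R(v)$) in which $R^\varepsilon(v)$ makes an extra traversal is a contiguous prefix together with a contiguous suffix of that order. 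The paper proves exactly this, and not by any ``rigidity of the mesh'': the mesh segments are just other curves with their own noodles and do not act as barriers. Instead, one supposes $i<j<k$ with $Z_j$ duplicated by $v$ but $Z_i,Z_k$ not, and observes that the four regions into which $Z_i,Z_j,Z_k$ cut the noodle $N_\varepsilon(v)$ each have an odd number of boundary crossings with $R^\varepsilon(v)$ (reversals contribute evenly), hence each must contain an endpoint of $R^\varepsilon(v)$ --- impossible for a curve with two endpoints (Figure~\ref{fig:zonescontradiction}). Without this prefix-plus-suffix statement, your final ``straightening'' step cannot produce a faithful extension, since the definition requires precisely that the duplicated intersections form such a prefix and suffix.
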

The proof of the lemma is postponed \arxivorgd{to the appendix}{to the full 
version~\cite{ChmelJelinekArXivVersion}}.

\begin{lemma}\label{lem:nonextendablepreciserepresentations}
    For all $k\geq 1$, there exists a graph $G_k$ with a $\precg{k}$-string representation $R_k$ that cannot be faithfully extended into a $\precg{k+1}$-string representation.
\end{lemma}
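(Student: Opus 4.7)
The plan is to exhibit, for each $k \ge 1$, a graph $G_k$ with a $\precg{k}$-string representation $R_k$ whose geometry obstructs any faithful extension to $\precg{k+1}$.

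First I would analyze the freedom available to a faithful extension. A small extension attached at an endpoint $p$ of a curve $R(v)$ follows $v$ closely, so as it sweeps past an intersection $q$ of $v$ with a neighbor $u$ it necessarily crosses $u$ once, adding one new crossing between the extended curves. Hence the set of $v$-$u$ intersections doubled by $v$'s extension from $p$ forms a prefix of the sequence of $v$-$u$ intersections along $v$ starting from $p$, and similarly from the other endpoint $p'$; the same holds with $u$ and $v$ swapped. Extending $R_k$ into a $\precg{k+1}$-string representation thus amounts to choosing, for each endpoint of each curve, a ``reach'' parameter so that, for every edge $\{u,v\}$, the four prefixes (two on $u$, two on $v$) jointly contribute exactly one additional crossing to the pair, once the possible extension–extension crossings near each original intersection are accounted for.

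For the construction, I would take $G_k$ to be a small graph such as $K_3$ and arrange the three curves so that their pairwise intersections are heavily interleaved along each curve. Concretely, I would order the intersections along each curve so that any prefix from any endpoint which contains a single intersection with one neighbor already contains at least one intersection with the other neighbor as well. Under this interleaving, any choice of reaches aiming to add exactly one crossing to some pair is forced to add at least one crossing to another pair simultaneously, so the target count of $+1$ on every edge cannot be simultaneously realized.

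The main obstacle is to formalize this obstruction under the technical definition of faithful extension used in the full version, and to verify that the interleaving argument truly rules out every choice of reaches — including the subtler cases in which both endpoint-extensions of a single curve reach the same intersection, or in which extension–extension crossings partially compensate the doublings. I expect the final argument to take the form of a careful parity or counting argument: summing the contributions over all edges of $G_k$, one derives a numerical contradiction with the goal of adding exactly one crossing to every edge, with the interleaving of $R_k$ ensuring that no ``shift'' of reaches can balance the count.
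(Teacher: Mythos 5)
Your high-level strategy --- interleave the crossings so that any extension that supplies the one required extra crossing for some pair is forced to over-duplicate the crossings of another pair --- is essentially the mechanism the paper uses. But as written the proposal has real gaps. First, the technical definition of faithful extension already settles the issues you defer: the new arcs of distinct curves are required to be disjoint, so there are no extension--extension crossings to ``compensate'' anything, and the count for a pair $\{u,v\}$ is exactly $k$ plus the number of their common crossings that get duplicated. The whole problem therefore reduces to a clean combinatorial one (each curve duplicates a prefix and a suffix of its full crossing sequence; each pair must have exactly one crossing duplicated, by exactly one of its two curves), and the paper's proof is a short case analysis, not a global parity/counting argument over all edges. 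Second, your interleaving condition is unachievable as stated: the length-one prefix from any endpoint contains exactly one crossing, with exactly one neighbour, and no crossing with the other --- so the condition fails for every endpoint. What you actually need is that reaching the first $x$--$y$ crossing from any endpoint of $x$ (or of $y$) forces duplicating at least two crossings with the third curve.

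Third, and most substantively, $K_3$ is very unlikely to work for small $k$, and you exhibit no concrete representation. To block the pair $\{x,y\}$ in $K_3$ by the mechanism above, the first two crossings from each of the four endpoints of $x$ and $y$ must all be with $z$; since $x$ has only $k$ crossings with $z$, this already requires $k\ge 4$. The paper instead uses $K_4$ for $k\ge 2$: the representation is arranged so that the first and last crossings along each of $a_1,a_2$ are mutual $a_1$--$a_2$ crossings (likewise for $b_1,b_2$), so at most one curve of each pair can be extended at all, leaving some pair $\{a_i,b_j\}$ with no curve able to duplicate a crossing, hence stuck at $k$. For $k=1$ this argument collapses (a single mutual crossing cannot occupy both extreme positions of a curve that meets three neighbours), and the paper needs a separate, more involved gadget on $K_8$. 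Your proposal does not address the $k=1$ case at all, so even granting a repaired interleaving condition it would not prove the lemma for all $k\ge 1$.
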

Again, we postpone the proof \arxivorgd{to the appendix}{to the full version~\cite{ChmelJelinekArXivVersion}}.
The proof is based on simple case analysis of the representations of graphs in Figure~\ref{fig:noninclusiongraphs}, where the cases are split by possible faithful extensions of some of the strings involved.

\begin{figure}
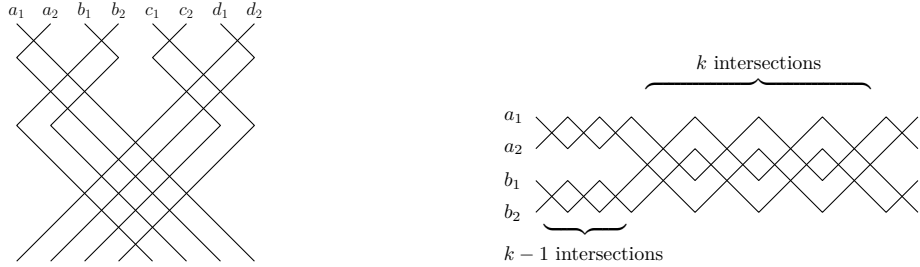

    \centering
    \input{img/noninclusiongraph1.tex}
    \input{img/noninclusiongraph2.tex}
    \caption{The graphs $G_1$ with representation $R_1$ (left), and $G_k$ with representation $R_k$ for $k\geq 2$ (right)}
    \label{fig:noninclusiongraphs}
\end{figure}

\begin{theorem}\label{thm:preciselyknotinkplusone}
    For all $k\geq 1$, \textsc{$\precg{k}$-string} $\not\subseteq$ \textsc{$\precg{k+1}$-string}.
\end{theorem}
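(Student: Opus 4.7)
The plan is to combine the two lemmas just stated into a short assembly argument, following the general pattern introduced earlier for proofs that use the Noodle-Forcing Lemma. The witness graph will be $G^\#=G^\#(R_k)$, where $R_k$ is the $\precg{k}$-string representation of $G_k$ supplied by Lemma~\ref{lem:nonextendablepreciserepresentations}.

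First, I would invoke Lemma~\ref{lem:extendingpreciserepresentations}(1), or equivalently Lemma~\ref{lem:preciserepresentationandnoodles}, to conclude that $G^\#$ admits a $\precg{k}$-string representation, so $G^\#\in\textsc{$\precg{k}$-string}$. Next, to show $G^\#\notin\textsc{$\precg{k+1}$-string}$, I would apply Lemma~\ref{lem:extendingpreciserepresentations}(2): $G^\#$ has a $\precg{k+1}$-string representation if and only if $R_k$ admits a faithful extension to a $\precg{k+1}$-string representation of~$G_k$. But Lemma~\ref{lem:nonextendablepreciserepresentations} says precisely that no such faithful extension exists; hence the right-hand side fails, and therefore $G^\#$ has no $\precg{k+1}$-string representation.

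Combining the two conclusions yields the desired separation, so \textsc{$\precg{k}$-string} is not contained in \textsc{$\precg{k+1}$-string}. The proof is essentially a one-line deduction from the two preceding lemmas; the real work has been deferred to those lemmas, where the main obstacle is Lemma~\ref{lem:nonextendablepreciserepresentations}, which requires the explicit case analysis over faithful extensions of the representations depicted in Figure~\ref{fig:noninclusiongraphs}.
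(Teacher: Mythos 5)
Your proposal is correct and matches the paper's own proof: both take $G^\#(R_k)$ for the non-extendable representation $R_k$ from Lemma~\ref{lem:nonextendablepreciserepresentations}, use part (1) of Lemma~\ref{lem:extendingpreciserepresentations} for membership in \textsc{$\precg{k}$-string}, and part (2) to rule out a $\precg{k+1}$-string representation. Nothing is missing.
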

\begin{proof}
    This follows immediately: by Lemma~\ref{lem:nonextendablepreciserepresentations}, there is a 
graph $G_k$ with its representation $R_k$ that cannot be faithfully extended, and therefore by Lemma~\ref{lem:extendingpreciserepresentations}, the graph $G^\#(R_k)$ has a $\precg{k}$-string 
representation, but it does not have a $\precg{k+1}$-string representation, proving the theorem.
\end{proof}

We have seen that a simple doubling argument shows that any $\precg{k}$-string graph is also a 
$\precg{4k}$-string graph. We will now show that this is best possible.

\begin{figure}
    \centering
    \includegraphics{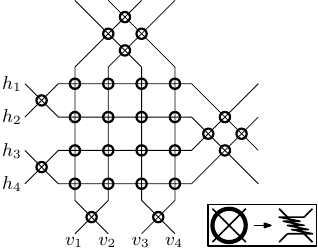}
    \caption{The representation $R$ used in the proof of Theorem~\ref{thm-noteven4k}. Each circled crossing represents $k$ crossings close to each other, where $k$ is odd.}
    \label{fig-noteven4k}
\end{figure}

\begin{theorem}\label{thm-noteven4k}
For every odd $k$, there is a $\precg{k}$-string graph $G_k$ which is not a $\precg{4k-2}$-string graph.
\end{theorem}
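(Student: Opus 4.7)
The plan is to apply the Noodle-Forcing Lemma to the \precg{k}-string representation $R$ depicted in Figure~\ref{fig-noteven4k} and to set $G_k := G^\#(R)$. By Lemma~\ref{lem:preciserepresentationandnoodles}, $G_k$ has a \precg{k}-string representation, so only the non-inclusion requires work. For the non-inclusion, I would suppose that $G_k$ admits a \precg{4k-2}-string representation $R'$, precook it into $R^\varepsilon$ (which inherits the \precg{4k-2} property), and derive a contradiction by a parity count.

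Fix an adjacent pair $u,v$ in the original graph; the noodles $N_\varepsilon(u)$ and $N_\varepsilon(v)$ meet in exactly $k$ zones $Z_1,\dots,Z_k$, and these contain every crossing of $R^\varepsilon(u)$ with $R^\varepsilon(v)$. The first step is to show that for each $i$ the numbers $t_u^i$ and $t_v^i$ of $u$- and $v$-traversals through $Z_i$ are both odd. For $\varepsilon$ small enough each $Z_i$ is a parallelogram spanning the full cross-section of $N_\varepsilon(u)$, so $N_\varepsilon(u)\setminus Z_i$ has two components, one containing each endpoint of $R(u)$. Granted that the endpoints of $R^\varepsilon(u)$ lie near those of $R(u)$, the curve $R^\varepsilon(u)$ runs between the two components, so its signed net number of crossings through $Z_i$ equals $\pm 1$; since reversals contribute $0$ while each traversal contributes $\pm 1$ to this signed count, $t_u^i$ is odd. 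The symmetric argument gives $t_v^i$ odd.

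The second step is a local parity count. From the precooking discussion, a $u$-traversal crosses a $v$-traversal an odd number of times while any fragment pair involving a reversal crosses an even number of times. Hence the number of crossings of $R^\varepsilon(u)$ with $R^\varepsilon(v)$ inside $Z_i$ has the same parity as $t_u^i\cdot t_v^i$, which is odd. Summing over the $k$ zones, the total number of crossings between $R^\varepsilon(u)$ and $R^\varepsilon(v)$ is a sum of $k$ odd numbers, hence odd (using that $k$ is odd). But adjacency of $u$ and $v$ in $G_k$ forces this total to equal $4k-2$, which is even — a contradiction.

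The main obstacle is the endpoint claim used in the first step: the bare Noodle-Forcing Lemma only guarantees Hausdorff-closeness of $R^\varepsilon(v)$ to $R(v)$, not closeness of endpoints. To pin down $R^\varepsilon(u)$'s endpoints in opposite components of $N_\varepsilon(u)\setminus Z_i$ one must exploit the specific grid structure built around the endpoints of each curve of $R$ during the noodle-forcing construction (or augment the gadget of Figure~\ref{fig-noteven4k} with small ``terminal'' gadgets that anchor $R^\varepsilon(u)$'s endpoints in the desired cells of the noodle). Once the endpoint placement is secured, the parity accounting above completes the proof.
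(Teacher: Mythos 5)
There is a genuine gap, and it is exactly the one you flag at the end: the claim that the two endpoints of $R^\varepsilon(u)$ lie in the two opposite components of $N_\varepsilon(u)\setminus Z_i$. The Noodle-Forcing Lemma gives only mutual $\varepsilon$-closeness of the point sets $R^\varepsilon(u)$ and $R(u)$; it says nothing about where the \emph{endpoints} of $R^\varepsilon(u)$ sit inside the noodle. Neither of your proposed patches can repair this. The grid built in the noodle-forcing construction forces $R^\varepsilon(u)$ to \emph{visit} a neighborhood of each endpoint of $R(u)$ (this is what guarantees at least one traversal per zone), but a curve can run from the middle of the noodle out to one end, double back through the entire noodle to the other end, and return to the middle: it is still Hausdorff-close to $R(u)$, yet both of its endpoints lie in the same tip (or inside a crossing area), and then every zone of that crossing area has an \emph{even} number of $u$-traversals. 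No auxiliary ``terminal gadget'' can forbid this, because intersection constraints can only force the curve to reach a region, never to terminate there. So your Step 1 conclusion that $t_u^i$ and $t_v^i$ are both odd for every adjacent pair simply need not hold, and with it the whole parity count collapses.

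This is precisely why the paper's gadget is $K_8$ (four ``vertical'' and four ``horizontal'' strings) rather than a single pair: the proof classifies each intersection, from the viewpoint of each of its two curves, as ambiguous, peripheral, or central according to where the two endpoints of $R^\varepsilon(u)$ fall relative to the crossing area, and then argues combinatorially. A pair cannot be peripheral--peripheral (that would force at least $4k>4k-2$ crossings), cannot be central--central (that gives your odd-sum contradiction), and — the key point — since each curve has only two endpoints, an ordering argument along the noodle shows that all ``middle'' crossing areas of a non-peripheral curve must be central for it. Chasing this through the $4+4$ structure produces a pair that is central for both curves, and only \emph{then} does your parity argument (which coincides with the paper's second observation) apply. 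In short, your final contradiction is the right one, but it must be \emph{earned} for one specific pair via the endpoint case analysis; it cannot be asserted for every pair.
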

\begin{proof}
We use as the starting point the $\precg{k}$-string representation $R$ of the graph $K_8$ 
depicted in Figure~\ref{fig-noteven4k}. We will call the strings $v_1,\dotsc,v_4$ \emph{vertical}, 
and the remaining four strings \emph{horizontal}. Note that each circled intersection in that figure 
represents a $k$-tuple of intersections close to each other. 

Applying the Noodle-Forcing Lemma to the representation $R$, we obtain a $\precg{k}$-string graph 
$G_k=G^\#(R)$. We claim that $G_k$ has no $\precg{4k-2}$-string representation. For contradiction, assume 
that such a representation exists, and precook it into a representation $R^\varepsilon$, in which
every string $R^\varepsilon(u)$ is confined to a sufficiently narrow noodle $N(u)$ 
surrounding~$R(u)$. 

We shall 
use the following terminology: for any two strings $u$, $v$ of $R$, their \emph{crossing area}, 
denoted by $C(u,v)$, is the union of the $k$ zones formed by the intersections of their noodles, 
together with the parts of the noodles that lie between the zones. Thus, the crossing area forms a 
contiguous part of each of the two noodles. Removing the crossing area from a noodle $N(u)$ splits 
$N(u)$ into two connected pieces, which we call the \emph{tips} of $N(u)$ defined by $C(u,v)$.   
For a vertex $u$, we say that its intersection with another vertex $v$ is 
\begin{itemize}
\item \emph{ambiguous}, if at least one endpoint of $R^\varepsilon(u)$ is in $C(u,v)$,
\item \emph{peripheral}, if both endpoints of $R^\varepsilon(u)$ are in the same tip of $N(u)$ 
defined by $C(u,v)$, and 
\item \emph{central}, if each endpoint of $R^\varepsilon(u)$ is in a different tip of $N(u)$ 
defined by $C(u,v)$.
\end{itemize}
Notice that if the intersection with $v$ is peripheral for $u$, then every zone in $C(u,v)$ has an 
even number of $u$-traversals. Similarly, if the intersection is central for $u$, then 
every zone in $C(u,v)$ has an odd number of $u$-traversals.
We make the following observations:
\begin{enumerate}
\item For two vertices $u,v$ their mutual intersection cannot be peripheral for both of them, since 
this would mean that each zone in $C(u,v)$ has at least four crossing points of $R^\varepsilon(u)$ 
and $R^\varepsilon(v)$, which is impossible in a $\precg{4k-2}$-string representation.
\item It is also impossible for the intersection of $u$ and $v$ to be central for both vertices, 
since then every zone would contain an odd number of mutual crossing points.
\item Consider four distinct vertices $u, v, v', v''$, where the three crossing areas $C(u,v)$, 
$C(u,v')$ and $C(u,v'')$ appear in this order along the noodle $N(u)$. If neither $C(u,v)$ nor 
$C(u,v'')$ is peripheral for the vertex $u$, then $C(u,v')$ is central for~$u$.
\end{enumerate}
Referring to Figure~\ref{fig-noteven4k}, consider the crossing of $v_1$ and $v_2$. Since it cannot 
be peripheral for both curves by the first observation above, suppose w.l.o.g. that it is 
non-peripheral for~$v_1$. Likewise, suppose that the crossing of $v_3$ and $v_4$ is non-peripheral 
for $v_3$. We then look at the intersection of $v_1$ and $v_3$, and let $v_i\in\{v_1,v_3\}$ be the 
vertex for which the intersection is non-peripheral. It follows from the third observation that 
$v_i$ has central intersections with all the four horizontal vertices $h_1,\dotsc,h_4$. By analogous 
reasoning, a horizontal vertex $h_j\in\{h_1,\dotsc,h_4\}$ has central intersections with all the 
four vertical vertices, including~$v_i$. This however means that $R^\varepsilon(v_i)$ and 
$R^\varepsilon(h_j)$ have an odd number of mutual crossings by the second observation, a 
contradiction.
\end{proof}

We now turn our attention to the class \textsc{odd-string}.

\begin{theorem}\label{thm:preciselynonoddstring}
    There is a $\precg{2}$-string graph which is not an odd-string graph.
\end{theorem}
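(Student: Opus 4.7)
The plan is to apply the Noodle-Forcing Lemma to the simplest possible $\precg{2}$-string representation and then derive a contradiction from a parity argument on zones. Concretely, I would take $G=K_2$ with vertices $u,v$ and the trivial $\precg{2}$-string representation $R$ in which $R(u)$ and $R(v)$ are two curves crossing in exactly two points, and set $G^{\#}=G^{\#}(R)$. By Lemma~\ref{lem:preciserepresentationandnoodles}, $G^{\#}$ is a $\precg{2}$-string graph that still contains $K_2$ as an induced subgraph, so the remaining task is to show that $G^{\#}$ admits no odd-string representation.

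To rule out such a representation, I would assume for contradiction that an odd-string representation $R'$ of $G^{\#}$ exists, and precook it into $R^{\varepsilon}$. The noodles $N(u)$ and $N(v)$ intersect in exactly two zones $Z_1,Z_2$. The key step is the following parity-locking observation: for each zone $Z_i$, both $t_u^i$ (the number of $u$-traversals in $Z_i$) and $t_v^i$ are odd. Since $R$ is a $\precg{2}$-string representation, each zone contains exactly one of the two crossings of $R(u)$ and $R(v)$, so $R(u)$ passes through $Z_i$ as a single traversal. Hence the two endpoints of $R(u)$ lie in different connected components of the narrow strip $N(u)\setminus Z_i$; for $\varepsilon$ small enough, the endpoints of $R^{\varepsilon}(u)$ inherit this separation, and because each $u$-traversal switches sides while each $u$-reversal does not, the number of $u$-traversals in $Z_i$ must be odd. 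By symmetry $t_v^i$ is odd as well.

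With this claim in hand, the rest is routine bookkeeping with the framework from Section~\ref{sec:noninclusions}. Inside each zone $Z_i$, every $u$-traversal meets every $v$-traversal in an odd number of points while any reversal contributes evenly to the crossing count, so the total number of crossings between $R^{\varepsilon}(u)$ and $R^{\varepsilon}(v)$ inside $Z_i$ has the same parity as $t_u^i\cdot t_v^i\equiv 1\pmod 2$. Summing over the two zones, the total crossing count between $R^{\varepsilon}(u)$ and $R^{\varepsilon}(v)$ has parity $1+1\equiv 0\pmod 2$, contradicting the odd-string requirement that adjacent curves cross an odd number of times.

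The only nontrivial step is the parity-locking observation about $t_u^i$: the main obstacle is to verify carefully that a thin noodle is split by each zone into two connected components with the endpoints landing on opposite pieces, but this follows from choosing $\varepsilon$ sufficiently small and invoking the already-established properties of precooked representations. The conceptual punchline is that a $\precg{2}$-string representation forces an \emph{even} number of zones per adjacent pair, and the parity-locking then forces the total crossing count to be even, which is incompatible with the odd-string condition.
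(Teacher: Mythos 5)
There is a genuine gap, and it is located exactly where you flagged the ``only nontrivial step'': the parity-locking observation is false. The Noodle-Forcing Lemma guarantees that $R^{\varepsilon}(u)$ is Hausdorff-close to $R(u)$ --- it must \emph{visit} an $\varepsilon$-neighborhood of each endpoint of $R(u)$ --- but it says nothing about where the \emph{endpoints of $R^{\varepsilon}(u)$} lie. There is no correspondence between the endpoints of $R(u)$ and those of $R^{\varepsilon}(u)$, so the latter need not be separated by a zone $Z_i$: both endpoints of $R^{\varepsilon}(u)$ may sit in the same tip of $N(u)$, or one may sit inside the crossing area between the two zones. In the latter case the curve can traverse $Z_1$ twice and $Z_2$ once, so $t_u^1$ is even while $t_u^2$ is odd. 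This is precisely why the paper, in the surrounding results, only asserts that each zone has \emph{at least one} traversal, and why Theorem~\ref{thm-noteven4k} has to introduce the trichotomy of ambiguous/peripheral/central intersections rather than assuming every intersection is central.

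Worse, the gadget itself cannot be repaired: the graph $G^{\#}$ obtained from $K_2$ with two crossings \emph{is} an odd-string graph. By Lemma~\ref{lem:extendingpreciserepresentations}, $G^{\#}$ has a $\precg{3}$-string representation iff $R$ admits a faithful extension to one, and it does --- extend $R(u)$ past one of its endpoints to duplicate the nearest crossing with $R(v)$, yielding three crossings. This realizes exactly the ``$2+1$ traversal'' configuration described above. So the conclusion you are trying to prove about this particular $G^{\#}$ is false, not merely unproven. The paper's actual proof uses a larger gadget ($K_8$, with two ``bold'' curves each meeting six ``thin'' curves) and turns the endpoint issue into the engine of the argument: it shows that for every intersecting pair, at least one of the two curves must spend an endpoint ``covering'' that intersection (placing an endpoint in the crossing area) to achieve odd parity, and then counts endpoints --- each bold curve has only two, so two thin curves are left to cover their bold intersections themselves, exhausting their own endpoints and leaving their mutual intersection uncoverable. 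Any correct proof of this theorem must involve some such budget argument on endpoints; a single pair of curves gives you nothing, because one free endpoint per curve is always enough to fix the parity of one crossing area.
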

\begin{proof}
    Let us consider the graph $G$ with representation $R$ as in Figure~\ref{fig:notoddstringrepresentation}.
    We apply the Noodle-Forcing Lemma on $R$, getting the graph $G^\#$. We will show that $G^\#$ is 
not an odd-string graph.

For contradiction, let $R'$ be an odd-string representation of $G^\#$, which we 
precook into a representation $R^{\varepsilon}$.  Let $N(u)$ be the noodle of the vertex~$u$. 
Note that any pair of intersecting noodles $N(u)$, $N(v)$ forms two zones. The \emph{crossing area} 
of $N(u)$ and $N(v)$, denoted by $C(u,v)$, is then the union of the two zones together with the 
sections of $N(u)$ and of $N(v)$ that lie between the two zones. 

For a pair of intersecting noodles $N(u)$ and $N(v)$, we say that the curve $R^{\varepsilon}(u)$ 
\emph{covers} the intersection of $N(u)$ and $N(v)$, if $R^{\varepsilon}(u)$ has at least one 
endpoint in the crossing area $C(u,v)$.

We claim that for any pair of intersecting noodles $N(u)$ and $N(v)$, at least one of the two curves 
$R^{\varepsilon}(u), R^{\varepsilon}(v)$ must cover their intersection. Indeed, suppose for 
contradiction that neither of the two curves covers the intersection, and let $Z$ and $Z'$ be the 
two zones of $N(u)\cap N(v)$. Then either both $Z$ and $Z'$ have an odd number of $u$-traversals or 
both $Z$ and $Z'$ have an even number of $u$-traversals, depending on the positions of the 
two endpoints of $R^\varepsilon(u)$. The same is true for $v$-traversals as well. Thus, the number 
of crossings between $R^{\varepsilon}(u)$ and $R^{\varepsilon}(v)$ inside $Z$ has the same parity as 
the number of such crossings inside $Z'$, showing that the two curves have an even number of 
crossings overall. This is impossible, since $R^\varepsilon$ is an odd-string representation. This 
shows that at least one of the curves covers the intersection.

Now, we note that each of the two bold curves can only cover its crossings with at most two of the thin
 curves. Therefore, there are two thin curves $R^{\varepsilon}(x), R^{\varepsilon}(y)$ that do 
not have their crossings with the bold curves covered by the bold curves, and therefore the thin
curves have to cover the bold curve crossings with their endpoints.
    But this implies that neither of the two curves $R^{\varepsilon}(x), R^{\varepsilon}(y)$ can 
cover the crossing of $N(x)$ and $N(y)$, which is a contradiction.
\end{proof}

\begin{figure}
    \centering
    \input{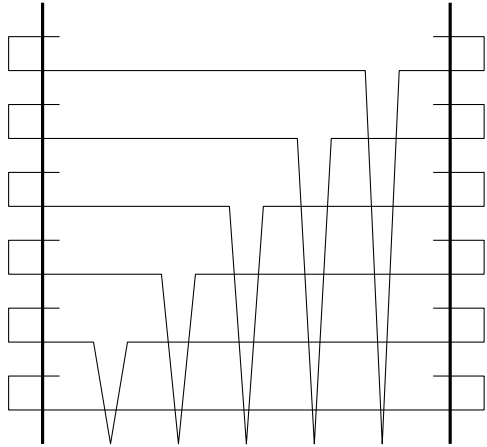}
    \caption{The graph $G$ with a $\precg{2}$-string representation for Theorem~\ref{thm:preciselynonoddstring}}
    \label{fig:notoddstringrepresentation}
\end{figure}

This result shows that the class \textsc{odd-string} differs from the class \textsc{string}.
Moreover, we can summarize our findings by the following proposition.
\begin{proposition}
    For $k, \ell\in\bbN$, $\text{\textsc{$\precg{k}$-string}}\subseteq\text{\textsc{$\precg{\ell}$-string}}$ if and only if one of the two conditions holds
    \begin{itemize}
        \item $k\leq \ell$ and $k,\ell$ have the same parity, or
        \item $k$ is odd, $\ell$ is even and $\ell\geq 4k$.
    \end{itemize}
\end{proposition}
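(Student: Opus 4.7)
The plan is to prove both directions of the biconditional. The forward direction (either condition implies inclusion) should follow directly from Propositions~\ref{prop:preciselystringplustwo} and~\ref{prop:preciselystringtimesfour}. The backward direction (inclusion forces one of the conditions) I would establish by a case analysis invoking Theorems~\ref{thm:preciselykplusonenotink}, \ref{thm-noteven4k}, and~\ref{thm:preciselynonoddstring}.

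For the forward direction, I would first handle the case $k\leq\ell$ with $k$ and $\ell$ of the same parity: applying Proposition~\ref{prop:preciselystringplustwo} exactly $(\ell-k)/2$ times yields \textsc{$\precg{k}$-string} $\subseteq$ \textsc{$\precg{\ell}$-string}. In the remaining admissible case $k$ odd, $\ell$ even, $\ell\geq 4k$, I would first apply Proposition~\ref{prop:preciselystringtimesfour} to obtain \textsc{$\precg{k}$-string} $\subseteq$ \textsc{$\precg{4k}$-string} (noting that $4k$ is even), and then iterate Proposition~\ref{prop:preciselystringplustwo} another $(\ell-4k)/2$ times to reach \textsc{$\precg{\ell}$-string}.

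For the backward direction, I would assume that neither listed condition holds and deduce \textsc{$\precg{k}$-string} $\not\subseteq$ \textsc{$\precg{\ell}$-string}, splitting into three exhaustive subcases. If $k>\ell$, Theorem~\ref{thm:preciselykplusonenotink} (applied with the roles of $k$ and $\ell$ swapped) immediately gives the non-inclusion. If $k<\ell$ with $k$ even and $\ell$ odd, then $k\geq 2$, so Proposition~\ref{prop:preciselystringplustwo} yields \textsc{$\precg{2}$-string} $\subseteq$ \textsc{$\precg{k}$-string}; a hypothetical chain \textsc{$\precg{2}$-string} $\subseteq$ \textsc{$\precg{k}$-string} $\subseteq$ \textsc{$\precg{\ell}$-string} $\subseteq$ \textsc{odd-string} (the last inclusion holding because $\ell$ is odd) would place the witness graph of Theorem~\ref{thm:preciselynonoddstring} into \textsc{odd-string}, a contradiction. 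Finally, if $k<\ell$ with $k$ odd, $\ell$ even, and $\ell<4k$, then $\ell\leq 4k-2$, so Proposition~\ref{prop:preciselystringplustwo} gives \textsc{$\precg{\ell}$-string} $\subseteq$ \textsc{$\precg{4k-2}$-string}; composing with the hypothetical inclusion would yield \textsc{$\precg{k}$-string} $\subseteq$ \textsc{$\precg{4k-2}$-string}, contradicting Theorem~\ref{thm-noteven4k}.

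The main thing I would need to verify carefully is that the case enumeration is truly exhaustive: the pairs $(k,\ell)$ not covered by either stated condition must be shown to fall into one of the three subcases above (namely $k>\ell$; $k<\ell$ with mixed parities $(k,\ell)=(\text{even},\text{odd})$; or $k<\ell$ with $(k,\ell)=(\text{odd},\text{even})$ and $\ell<4k$). Since all the heavy lifting is already done by the earlier theorems, this last proposition is essentially a bookkeeping exercise, with no new technical obstacle beyond ensuring that the three subcases fit together cleanly and collectively exhaust the complement of the two sufficient conditions.
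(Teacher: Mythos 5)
Your proof is correct and follows essentially the same route as the paper: sufficiency via Propositions~\ref{prop:preciselystringplustwo} and~\ref{prop:preciselystringtimesfour}, and necessity via an exhaustive three-way case analysis using Theorems~\ref{thm:preciselykplusonenotink}, \ref{thm:preciselynonoddstring}, and~\ref{thm-noteven4k}, with the case split ($\ell<k$; $k<\ell$ with $k$ even, $\ell$ odd; $k<\ell$ with $k$ odd, $\ell$ even, $\ell\le 4k-2$) indeed covering the complement of the two stated conditions. If anything, your bookkeeping is more careful than the paper's: you correctly invoke Theorem~\ref{thm:preciselykplusonenotink} for the case $\ell<k$, whereas the paper's one-line proof cites Theorem~\ref{thm:preciselyknotinkplusone} for that case, which appears to be a mix-up of the two similarly named labels.
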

\begin{proof}
    The backward implication follows from Propositions~\ref{prop:preciselystringplustwo} and~\ref{prop:preciselystringtimesfour}.
    The forward implication follows from Theorems~\ref{thm:preciselyknotinkplusone} (which removes any case with $\ell<k$),~\ref{thm:preciselynonoddstring} (which removes any case where $k$ is even and $\ell$ is odd), and~\ref{thm-noteven4k} (which removes any case where $k$ is odd, $\ell$ is even and $\ell\leq 4k-2$).
\end{proof}

\section{Final remarks and open problems}

\subsection{$\precg{\ell}$-string graphs from $k$-string graphs}
It would be nice to clarify the inclusions between the classes \textsc{$k$-string} and 
\textsc{\precg{\ell}-string} for various values of $\ell$ and $k$. The key problem is to determine,
for a given $k$, the smallest $\ell$ such that \textsc{$k$-string} is a subclass of 
\textsc{\precg{\ell}-string}. 

Immediately from Theorem~\ref{thm:preciselyknotinkplusone}, we can see that (for $k>1$) $\ell$ must 
be at least $k+2$, since $k$-string graphs contain both the class of $\precg{k-1}$-string graphs and 
$\precg{k}$-string graphs, and therefore are not a subclass of either $\precg{k}$-string or 
$\precg{k+1}$-string. On the other hand, our current methods get no better upper bound than $\ell\le 
8k$.

To establish the bound, we first define the class of proper $k$-string graphs to be the class of all 
$k$-string graphs that have a proper $k$-string representation. Our proof works in two steps: in the 
first step, we build a proper $2k$-string representation from a $k$-string representation, and in the 
second step, we build a $\precg{4\ell}$-string representation from a proper $\ell$-string 
representation. We obtain the following result, whose proof appears 
\arxivorgd{in the appendix}{in the full version~\cite{ChmelJelinekArXivVersion}}. We do not know 
if the bound $8k$ is tight.

\begin{theorem}\label{thm:stringtopreciselykstring}
    For all $k\geq 1$, \textsc{$k$-string} is a subclass \textsc{$\precg{8k}$-string}.
\end{theorem}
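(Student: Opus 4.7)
The plan follows the two-step decomposition proposed by the authors. First I would prove an auxiliary lemma (call it Lemma A): every $k$-string graph admits a proper $2k$-string representation. Second, I would prove (Lemma B): every graph with a proper $\ell$-string representation belongs to \textsc{$\precg{4\ell}$-string}. Applying the two lemmas successively with $\ell=2k$ immediately gives the theorem.

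For Lemma A, I start from an arbitrary $k$-string representation of $G$. Since at most $k$ intersection points are permitted per pair, no two curves can share an arc, so each pair has only isolated intersections. I first approximate each curve by a sufficiently close simple piecewise-linear curve with finitely many bends, preserving the intersection graph by a standard topological argument. The resulting representation may still fail to be proper because of tangential touches, multiple points shared by three or more curves, or intersections coinciding with bends or endpoints. I then apply a small generic perturbation: triple (and higher) points resolve into pairwise transversal crossings in a small neighborhood, with each pair receiving at most as many nearby crossings as the multiplicity contributed to that pair; tangential touches resolve into either $0$ or $2$ nearby transversal crossings, with the choice controlled by the direction of perturbation; and the remaining degeneracies disappear. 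For each originally adjacent pair $(u,v)$ I orient the perturbation so that at least one of the original intersection points survives as a transversal crossing, preserving the edge. Since each original intersection contributes at most $2$ transversal crossings to any single pair after perturbation, every pair has at most $2k$ crossings in the resulting proper representation.

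For Lemma B, let $R$ be a proper $\ell$-string representation in which pair $(u,v)$ has $j_{uv}\in\{1,\dotsc,\ell\}$ crossings whenever $u,v$ are adjacent. I first apply the quadrupling operation from Figure~\ref{fig:preciselystringchangetimesfour} at every crossing of $R$. Properness guarantees that each crossing has a small neighborhood disjoint from all other curves, so the operation can be carried out independently at each crossing and turns it into four nearby transversal crossings. After this step, every adjacent pair has exactly $4 j_{uv}$ crossings. I then apply the local ``add two intersections'' move from Figure~\ref{fig:preciselystringchangeplustwo} to each adjacent pair $(u,v)$ exactly $2(\ell - j_{uv})$ times; since each application lives in a small neighborhood of a single crossing, it affects only the count of the pair it is applied to. The final count for every adjacent pair is $4 j_{uv} + 4(\ell - j_{uv}) = 4\ell$, giving the required $\precg{4\ell}$-string representation.

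The main obstacle is Lemma A: the genericity-plus-adjacency-preservation argument is folklore but must be executed carefully, especially to exclude the pathological case where all intersections of an edge $(u,v)$ are tangencies that could collectively resolve to zero crossings. Some local direction-by-direction analysis is needed to choose a perturbation that keeps at least one crossing per originally adjacent pair. By contrast, Lemma B is essentially a bookkeeping exercise whose correctness follows cleanly from properness and the locality of the operations underlying Propositions~\ref{prop:preciselystringplustwo} and~\ref{prop:preciselystringtimesfour}.
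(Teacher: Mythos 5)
Your proposal is correct and follows essentially the same route as the paper: the identical two-step decomposition through proper $2k$-string representations, with your Lemma A matching the paper's argument (the paper deterministically redraws each tangency into exactly two crossings rather than invoking a generic perturbation with chosen directions, but the $2k$ bound arises the same way) and your Lemma B matching its doubling-then-padding computation $4j_{uv}+4(\ell-j_{uv})=4\ell$. One caveat on phrasing: the quadrupling is the global doubling of each string (whose four new crossings merely happen to lie near each original crossing), not a local move performed independently at each crossing --- a purely local rerouting of one arc near a transversal crossing, keeping its endpoints fixed, can only yield an odd number of crossings in that neighborhood.
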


\subsection{Complexity}
After defining the new classes of odd-string graphs and $\precg{k}$-string graphs, we may naturally 
ask about the complexity of their recognition. We can easily see that all mentioned classes contain 
all grid intersection graphs, i.e., the intersection graphs of horizontal and vertical segments (for 
$\precg{2}$-string graphs, we use Proposition~\ref{prop:preciselystringtimestwoforbipartite}) and are 
contained in string graphs. Therefore, by the sandwiching results of Kratochvíl~\cite{KratochvilNPhard}, or Mustaţă and 
Pergel~\cite{MustataPergelRecognition}, we see that the recognition problem of \textsc{odd-string} as 
well as of \textsc{$\precg{k}$-string} is NP-hard.

For $\precg{k}$-string graphs with fixed $k$, we can easily see that the problem is in NP (and 
hence NP-complete). However, when $k$ is a part of the input, it is not clear whether the problem 
still remains in NP. 

For the recognition of \textsc{odd-string}, it is not even clear whether the problem is decidable, 
as there is no known upper bound on the number of intersections an odd-string graph on $n$ vertices 
may require in its representation. This situation is reminiscent of a similar issue with the 
recognition of string graphs, whose decidability used to be an open problem until Schaefer and 
Štefankovič first showed that the problem is decidable~\cite{SchaeferStefankovicStringDecidable}, and 
later with Sedgwick that it belongs to NP~\cite{SchaeferSedgwickStefankovicStringInNP}. 
However, we are unable to adapt their methods to odd-string representations. 

Another type of complexity questions focuses on  the complexity of deciding the existence of a 
$\precg{k}$-string representation when we are given a $\precg{\ell}$-string representation as the 
input. The following cases seem interesting:
\begin{itemize}
    \item $k=\ell+1$ (i.e., can we extend the representation to have one more intersection per pair 
of curves? It also makes sense to restrict this question to the faithful extensions \arxivorgd{from Definition~\ref{def-faithful} in the appendix}{fully defined in the full version~\cite{ChmelJelinekArXivVersion}}),
    \item $k=\ell-2$ (i.e., can we remove two intersections per pair, or is this the ``minimal'' representation of this graph with respect to the number of intersection points per pair with the given parity?),
    \item as a generalization of the previous case, $k=\ell-c$ for any $c\in\mathbb{N}$.
\end{itemize}
We remark that analogous questions have previously been tackled, e.g., for the $B_k$-VPG hierarchy, 
where Chaplick et al.~\cite{ChaplickEtAlNoodles} showed that recognizing $B_k$-VPG graphs is 
NP-complete, even when a $B_{k+1}$-VPG representation is given as part of the input.

\bibliographystyle{splncs04}
\bibliography{main}

\arxivorgd{}{\end{document}} 

\appendix
\section{Proofs of the hierarchy inclusions}
\begin{proof}[Proposition~\ref{prop:preciselystringplustwo}]
    We build a $\precg{k+2}$-string representation from a $\precg{k}$-string representation.

    For every pair of intersecting curves $c_1,c_2$, we choose a single intersection point $P$ of the two curves.
    As we assume the representations to be proper, there exists a sufficiently small 
neighborhood of $P$ that contains only parts of $c_1, c_2$ with $P$ as their only 
intersection point and no other curve.
    We change the representation in this neighborhood of $P$ so that the curves 
intersect three times -- $c_2$ will remain the same, while we change $c_1$ to cross $c_2$ three 
times as in Figure~\ref{fig:preciselystringchangeplustwo}.
\end{proof}

\begin{proof}[Proposition~\ref{prop:preciselystringtimesfour}]
    Given a $\precg{k}$-string representation $R$ of $G$, we construct a $\precg{4k}$-string 
representation $R'$ of $G$.
    Intuitively, we ``double'' each string of $R$, which quadruples the number of intersections.

    As the representation is proper, for each curve $c$ in the representation, there exists an $\varepsilon_c>0$ such that in the $\varepsilon_c$-neighborhood of $c$, no two other curves intersect, and if another curve in the representation enters the neighborhood, then it intersects $c$ precisely once and then leaves the neighborhood.
    We then take $\varepsilon:=\min_{c\in R}\varepsilon_c$.
    
     We extend each curve $c$ as follows: we take the curve $c$, draw another curve $c'$ parallel to 
$c$ inside the $\varepsilon/2$-neighborhood and join one of the two pairs of $\varepsilon/2$-close 
endpoints together by a single line segment, as shown in Figure~\ref{fig:preciselystringchangetimesfour}.
    It is clear that given two curves $c,d$ of $R$ intersecting in the point $P$, the two new 
doubled curves have all four intersections in the $\varepsilon$-neighborhood of $P$: the 
intersections between $c$ and $d$, $c'$ and $d$ and $d'$ and $c$ exist immediately by the 
construction.
    The intersection between $c'$ and $d'$ exists as the curves intersect the boundary of the $\varepsilon$-neighborhood in the cyclic order $c, c', d, d', c', c, d', d$, and therefore $c'$ and $d'$ must intersect as well.

    No more intersections are created anywhere else, which ensures that we indeed have a $\precg{4k}$-string representation.
\end{proof}
\begin{proof}[Proposition~\ref{prop:preciselystringtimestwoforbipartite}]
    We repeat the construction from the proof of the previous proposition, however, we only apply the ``doubling'' operation on the curves representing the vertices of a single partition of $G$.
    This ensures that there are precisely $2k$ intersection points between two curves with a non-empty intersection.
\end{proof}
\section{Proof of Lemma 2}
We briefly sketch the construction of $G^\#$ and $R^\#$, which works in two steps: in the first step, we overlay the proper string representation $R$ of a graph $G$ by a plane grid graph $H$ with particular properties, and in the second step, we construct a string representation based on $R$ and $H$ with the additional property that all newly added strings are horizontal or vertical line segments.

The first step is performed as follows.
We start by defining \emph{special} points of $R$ -- these are the endpoints of the curves, bend points of the curves, and intersection points of the curves.
We then construct the plane grid graph $H$ that overlays the representation with the following properties:
\begin{itemize}
    \item[(P1)] The edges of $H$ are drawn as vertical and horizontal segments, and every internal face of $H$ is a rectangle. Moreover, the outer face of $H$ does not intersect any curve of $R$.
    \item[(P2)] No curve of $R$ passes through a vertex of $H$ and no edge of $H$ passes through a special point of $R$.
    \item[(P3)] Every face of $H$ contains at most one special point of $R$ and no two faces containing a special point are adjacent.
    \item[(P4)] Every edge of $H$ has at most a single intersection with the curves of $R$.
    \item[(P5)] Every face of $H$ intersects at most two curves of $R$, and if a face $f$ intersects exactly two curves of $R$, then the two curves intersect inside the face $f$.
    \item[(P6)] Every curve of $R$ intersects the boundary of a face of $H$ at most twice.
\end{itemize}
The first three properties can be obtained by taking a grid that is large enough and fine enough.
The remaining three properties are then ensured by suitably splitting the faces of $H$; see Chaplick 
et al.~\cite{ChaplickEtAlNoodles} for details.

We proceed with the second step.
Given the plane graph $H$, we build a representation $R^\#$ of the graph $G^\#$ as follows.
Starting with the vertices of $H$, each $v\in V(H)$ is converted into two vertices $S_1(v), S_2(v)$ 
connected by an edge.
Each edge $\{u,v\}=e\in E(H)$ that is incident with $v$ is converted into three vertices $S(v,e), S(u,e), S(e)$ with edges $\{S(v,e), S(e)\},\{S(u,e),S(e)\}$.
If an edge $e$ that is incident to a vertex $v$ is drawn as a horizontal segment, we add an edge 
between $S(v,e)$ and $S_1(v)$ and if $e$ is drawn as a vertical segment, the edge is added between 
the vertices $S(v,e)$ and $S_2(v)$.
We can represent this naturally with a grid intersection representation.
To finish the construction, we add the vertices of $G$ as in the representation $R$ with edges given by the intersections.
An example of the construction is shown in Figure~\ref{fig:noodleexample}.

\begin{proof}
    Let us take the representation $R^\#$ as in the proof of the Noodle-Forcing Lemma and add 
the additional required intersections. We will refer to the curves in $R$ as the \emph{original 
curves}, and the curves in $R^\#\setminus R$ as \emph{new curves}. Note that the new curves are 
actually segments.

Since the original curves already form a $\precg{k}$-string representation, 
    we only have to take care of the intersections between two new curves, and the 
intersections between an original curve and a new one.
    We first take care of the intersections between pairs of new curves.
    The new curves induce three types of edges: the edges $\{S_1(v), S_2(v)\}$, $\{S(v,e), 
S_i(v)\}$, and $\{S(e),S(v,e)\}$.
    In the representation $R^\#$, any such edge corresponds to a single intersection point, therefore, 
we have to add $k-1$ more.
    In the case of edges $\{S(v,e), S_i(v)\}$ and $\{S(e),S(v,e)\}$, we extend the segment $S(v,e)$, 
so that on the extension on one endpoint, we add the intersections with $S(e)$ and on the extension 
of the other endpoint, we add the required intersections with $S_i(v)$.
    The edges $\{S_1(v), S_2(v)\}$ are resolved similarly, we extend one endpoint of $S_1(v)$ to add 
the required intersections. This is depicted as the first step in Figure~\ref{fig:noodleextension}.

    Finally, we take care of the intersections between the new curves and the original ones.
    The only edges formed by such intersections are of the form $\{S(e), v\}$ for $v\in V(G)$.
    To add the required additional intersections, we extend the strings representing the vertices 
$S(e)$ from one of the endpoints.
    The is the second step in Figure~\ref{fig:noodleextension}. This yields a $\precg{k}$-string 
representation of~$G^\#$.
\end{proof}
\section{Proof of Lemma 3}
Before stating the proof of the lemma, we first formally define the faithful extension.
\begin{definition}[Faithful extension]\label{def-faithful}
    Let $R$ be a proper string representation of a graph $G=(V,E)$ and let $R(v)$ be one of its strings. 
    Let $p_1,p_2,\dotsc,p_\ell$ be the intersection points on $R(v)$, in the order in which they appear 
    on it. Let $a,b\ge 0$ be integers such that $a+b\le \ell$. A \emph{faithful extension} of the curve 
    $R(v)$ with respect to $R$ is a curve $R^+(v)$ which contains $R(v)$ as a subset, and which 
    contains, in the order from beginning to end, $a+\ell+b$ intersection points $p'_a, 
    p'_{a-1},\dotsc,p'_1,p_1,p_2,\dotsc, p_\ell, p'_\ell,p'_{\ell-1},\dotsc,p'_{\ell-b+1}$ with the 
    following property: for any $i\in\{1,2,\dotsc,a\}\cup \{\ell-b+1,\ell-b+2,\dotsc,\ell\}$, if $R(u)$ 
    is the string intersected by $R(v)$ in $p_i$, then $R(u)$ is intersected by $R^+(v)$ in $p'_i$ and 
    moreover, $R(u)$ contains no other intersection between the two points $p_i$ and $p'_i$.
    We say that $R^+(v)$ is \emph{duplicating} the intersections $p_1,p_2,\dotsc,p_a$ and $p_\ell, 
    p_{\ell-1},\dotsc,p_{\ell-b+1}$ of~$R(v)$. 
    
    A representation $R^+$ of the graph $G$ is a \emph{faithful extension} of the 
    representation $R$, if
    \begin{enumerate}
    \item each string $R^+(v)$ is a faithful extension of $R(v)$ with respect to $R$,
    \item each intersection of two distinct strings $R(u)$ and $R(v)$ of $R$ is duplicated by at 
    most one of the two strings $R^+(u)$ and $R^+(v)$,
    \item for distinct vertices $u,v$, any intersection between $R^+(u)$ and $R^+(v)$ is either an 
    intersection of $R(u)$ and $R(v)$ or a point duplicating such an intersection; in particular, 
    $(R^+(u)\setminus R(u))$ and $(R^+(v)\setminus R(v))$ are disjoint, and
    \item for any $\varepsilon >0$, there is a homeomorphism of the plane which transforms $R^+$ in such 
    a way that each $R^+(v)$ is mapped into an $\varepsilon$-neighborhood of $R(v)$, while  
    $R(v)$ is mapped onto itself.
    \end{enumerate}
\end{definition}
\begin{proof}
    Given a $\precg{k}$-string representation $R$ of a graph $G$, we apply the Noodle-Forcing Lemma 
to the representation $R$ and we obtain a graph $G^\#$ with its representation $R^\#$ as described in 
the Noodle-Forcing Lemma. By Lemma~\ref{lem:preciserepresentationandnoodles}, $G^\#$ has a 
$\precg{k}$-string representation. It remains to show that it also satisfies the second property 
of Lemma~\ref{lem:extendingpreciserepresentations}.

    We first prove the reverse implication.
    Let $R^+$ be a $\precg{k+1}$-string representation of $G$ that faithfully extends $R$.
    We can then take the representation $R^\#$ of $G^\#$ and replace each string representing a vertex 
of $G$ by its extended version from $R^+$, drawn in a sufficiently small neighborhood of the 
original string, so that the extended string intersects the same strings as the original one.
    This forms a representation $\tilde{R}$ of $G^\#$, in which all pairs of distinct vertices of $G$ 
are represented by curves that intersect in either $0$ or $k+1$ points. We may also draw the faithfully extended strings in such a way that a string 
$\tilde R(v)$ representing a vertex from $G$ intersects a string representing a vertex from
$V(G^\#)\setminus V(G)$ in at most two points. We then use the same approach as in the proof of 
Lemma~\ref{lem:preciserepresentationandnoodles} to add the required number of intersections to the 
strings representing the vertices from $V(G^\#)\setminus V(G)$.

    We now prove the forward implication. 
    Suppose we are given a $\precg{k+1}$-string representation $R'$ of $G^\#$, and we want to 
show that it yields a representation $R^+$ that faithfully extends $R$.
    We then precook $R'$ into a $\precg{k+1}$-string representation $R^{\varepsilon}$.

Since there are $k$ zones determined by the intersections of $u$ and $v$, while $R^\varepsilon(u)$ 
and $R^\varepsilon(v)$ have $k+1$ intersections, it follows that there is a unique zone $Z_i$ which 
contains exactly two intersections between the two curves. There are only two possibilities how this 
might occur: either $Z_i$ has a single $u$-traversal and two $v$-traversals (we then say that $p_i$ 
is \emph{duplicated by~$v$}), or $Z_i$ has two $u$-traversals and one $v$-traversal (and we say that 
$p_i$ is \emph{duplicated by~$u$}).

Consider now a vertex $v$ of $G$, and let $p_1,p_2,\dotsc,p_\ell$ be all the intersections on $R(v)$ 
in the representation $R$, from beginning to end, and let $Z_1,\dotsc,Z_\ell$ be their 
corresponding zones. We now show that if $v$ duplicated an intersection $p_j$, then it must have 
also duplicated all the intersections before it or all the intersections after it. For 
contradiction, suppose that there are indices $i<j<k$ such that $p_j$ is duplicated by $v$, but 
$p_i$ and $p_k$ are not, as in Figure~\ref{fig:zonescontradiction}, 
and consider the disjoint regions $A,B,C,D$ as depicted there.
    Each of the four regions has an odd number of crossings with $R^\varepsilon(v)$ on the boundary, 
and therefore, there must be an endpoint of $R^\varepsilon(v)$ in each of the four regions.
    However, curves only have two endpoints, and this yields a contradiction.
    Therefore, the zones with intersections duplicated by $v$ must form two (possibly empty) 
consecutive intervals $Z_1,\ldots,Z_m$ for some $m\ge 0$ and $Z_{k},\ldots,Z_\ell$ for some $k>m$. 
By using the information about which intersection is duplicated by which vertex, we may now 
straightforwardly construct a faithful extension $R^+$ of $R$ which is a $\precg{k+1}$-string 
representation of~$G$, by confining each $R^+(v)$ inside the noodle defined by $R(v)$, and ensuring 
that $R^+(v)$ traverses each zone the same number of times as~$R^\varepsilon(v)$. 

    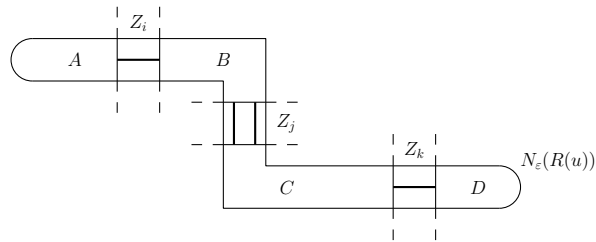
\begin{figure}
        \centering
        \tikzstyle{ipe stylesheet} = [
  ipe import,
  even odd rule,
  line join=round,
  line cap=butt,
  ipe pen normal/.style={line width=0.4},
  ipe pen heavier/.style={line width=0.8},
  ipe pen fat/.style={line width=1.2},
  ipe pen ultrafat/.style={line width=2},
  ipe pen normal,
  ipe mark normal/.style={ipe mark scale=3},
  ipe mark large/.style={ipe mark scale=5},
  ipe mark small/.style={ipe mark scale=2},
  ipe mark tiny/.style={ipe mark scale=1.1},
  ipe mark normal,
  /pgf/arrow keys/.cd,
  ipe arrow normal/.style={scale=7},
  ipe arrow large/.style={scale=10},
  ipe arrow small/.style={scale=5},
  ipe arrow tiny/.style={scale=3},
  ipe arrow normal,
  /tikz/.cd,
  ipe arrows, 
  <->/.tip = ipe normal,
  ipe dash normal/.style={dash pattern=},
  ipe dash dotted/.style={dash pattern=on 1bp off 3bp},
  ipe dash dashed/.style={dash pattern=on 4bp off 4bp},
  ipe dash dash dotted/.style={dash pattern=on 4bp off 2bp on 1bp off 2bp},
  ipe dash dash dot dotted/.style={dash pattern=on 4bp off 2bp on 1bp off 2bp on 1bp off 2bp},
  ipe dash normal,
  ipe node/.append style={font=\normalsize},
  ipe stretch normal/.style={ipe node stretch=1},
  ipe stretch normal,
  ipe opacity 10/.style={opacity=0.1},
  ipe opacity 30/.style={opacity=0.3},
  ipe opacity 50/.style={opacity=0.5},
  ipe opacity 75/.style={opacity=0.75},
  ipe opacity opaque/.style={opacity=1},
  ipe opacity opaque,
]
\definecolor{red}{rgb}{1,0,0}
\definecolor{blue}{rgb}{0,0,1}
\definecolor{green}{rgb}{0,1,0}
\definecolor{yellow}{rgb}{1,1,0}
\definecolor{orange}{rgb}{1,0.647,0}
\definecolor{gold}{rgb}{1,0.843,0}
\definecolor{purple}{rgb}{0.627,0.125,0.941}
\definecolor{gray}{rgb}{0.745,0.745,0.745}
\definecolor{brown}{rgb}{0.647,0.165,0.165}
\definecolor{navy}{rgb}{0,0,0.502}
\definecolor{pink}{rgb}{1,0.753,0.796}
\definecolor{seagreen}{rgb}{0.18,0.545,0.341}
\definecolor{turquoise}{rgb}{0.251,0.878,0.816}
\definecolor{violet}{rgb}{0.933,0.51,0.933}
\definecolor{darkblue}{rgb}{0,0,0.545}
\definecolor{darkcyan}{rgb}{0,0.545,0.545}
\definecolor{darkgray}{rgb}{0.663,0.663,0.663}
\definecolor{darkgreen}{rgb}{0,0.392,0}
\definecolor{darkmagenta}{rgb}{0.545,0,0.545}
\definecolor{darkorange}{rgb}{1,0.549,0}
\definecolor{darkred}{rgb}{0.545,0,0}
\definecolor{lightblue}{rgb}{0.678,0.847,0.902}
\definecolor{lightcyan}{rgb}{0.878,1,1}
\definecolor{lightgray}{rgb}{0.827,0.827,0.827}
\definecolor{lightgreen}{rgb}{0.565,0.933,0.565}
\definecolor{lightyellow}{rgb}{1,1,0.878}
\definecolor{black}{rgb}{0,0,0}
\definecolor{white}{rgb}{1,1,1}
\begin{tikzpicture}[ipe stylesheet,scale=0.5, every node/.style={scale=0.5}]
  \draw
    (96, 752)
     -- (272, 752)
     -- (272, 656)
     -- (448, 656);
  \draw
    (448, 624)
     -- (240, 624)
     -- (240, 720)
     -- (96, 720);
  \draw
    (96, 752)
     arc[start angle=90, end angle=270, radius=16];
  \draw
    (448, 656)
     arc[start angle=-90, end angle=90, x radius=16, y radius=-16];
  \draw
    (160, 752)
     -- (160, 720);
  \draw
    (192, 752)
     -- (192, 720);
  \draw
    (368, 656)
     -- (368, 624);
  \draw
    (400, 656)
     -- (400, 624);
  \draw
    (240, 704)
     -- (272, 704);
  \draw
    (240, 672)
     -- (272, 672);
  \draw[ipe pen heavier]
    (160, 736)
     -- (192, 736);
  \draw[ipe dash dashed]
    (160, 752)
     -- (160, 776);
  \draw[ipe dash dashed]
    (160, 720)
     -- (160, 696);
  \draw[ipe dash dashed]
    (192, 720)
     -- (192, 696);
  \draw[ipe dash dashed]
    (192, 752)
     -- (192, 776);
  \draw[ipe dash dashed]
    (240, 704)
     -- (216, 704);
  \draw[ipe dash dashed]
    (216, 672)
     -- (240, 672);
  \draw[ipe dash dashed]
    (272, 672)
     -- (296, 672);
  \draw[ipe dash dashed]
    (272, 704)
     -- (296, 704);
  \draw[ipe dash dashed]
    (368, 656)
     -- (368, 680);
  \draw[ipe dash dashed]
    (368, 624)
     -- (368, 600);
  \draw[ipe dash dashed]
    (400, 624)
     -- (400, 600);
  \draw[ipe dash dashed]
    (400, 656)
     -- (400, 680);
  \node[ipe node, anchor=center, font=\Large]
     at (128, 736) {$A$};
  \node[ipe node, anchor=center, font=\Large]
     at (240, 736) {$B$};
  \node[ipe node, anchor=center, font=\Large]
     at (288, 640) {$C$};
  \node[ipe node, anchor=center, font=\Large]
     at (432, 640) {$D$};
  \node[ipe node, font=\Large]
     at (464, 656) {$N_{\varepsilon}(R(u))$};
  \draw[ipe pen heavier]
    (368, 640)
     -- (400, 640);
  \draw[ipe pen heavier]
    (248, 704)
     -- (248, 672);
  \draw[ipe pen heavier]
    (264, 704)
     -- (264, 672);
  \node[ipe node, anchor=base, font=\Large]
     at (176, 760) {$Z_i$};
  \node[ipe node, anchor=west, font=\Large]
     at (280, 688) {$Z_j$};
  \node[ipe node, anchor=base, font=\Large]
     at (384, 664) {$Z_k$};
\end{tikzpicture}
        \caption{The impossible situation with zones in proof of Lemma~\ref{lem:extendingpreciserepresentations}}
        \label{fig:zonescontradiction}
    \end{figure}
\end{proof}

\section{Proof of Lemma 4}
\begin{proof}
    We start with the case $k=1$, where the graph is $G_1=K_8$, the complete graph on eight vertices, with its representation $R_1$ depicted in Figure~\ref{fig:noninclusiongraphs}.
    We want to show that there is no faithful extension of the representation and for contradiction, 
we assume that such extension exists.
    We will consider the added intersections of vertices $c_1,c_2,d_1,d_2$.

    First, we note that only one of $c_1,c_2$ and one of $d_1,d_2$ can be extended from the top endpoint farther than their intersection as otherwise, we would get more than two intersection points for the two vertices (let $c_x, d_y$ be the vertices that are extended from the top endpoint farther than their intersection with $c_{3-x}, d_{3-y}$ respectively).
    This also implies that at least one of the four intersections between $c_i,d_j$ cannot be added by the top extension -- in particular, the intersection between $c_{3-x}, d_{3-y}$.
    We note that some configurations of these extensions cannot be extended from the bottom (e.g., if $c_2$ is extended as far down as possible and $d_1$ is extended to its intersection with $c_1$, then the intersection point of $c_1,d_2$ cannot be reached from the bottom), however, such cases cannot be the extensions either way.

    We can also use the argument to see that there is at least one intersection point between $a_i$ and $b_j$ that is not covered by the extensions starting from the top.
    Therefore, at least one of $a_i,b_i$ must be extended from the bottom to obtain the uncovered intersection and the same is true for at least one $c_j,d_j$.
    However, each pair with one vertex $a_i$ or $b_i$ and the other vertex $c_j$ or $d_j$ has an intersection in the bottom half of the representation, and hence it cannot happen that both of them can be extended up to their so far uncovered intersection.
    Therefore, the representation cannot be faithfully extended.

    Next, we continue with the case $k\geq 2$.
    In this case, the graph is $G_k=K_4$, the complete graph on four vertices, with the representation $R_k$ as in Figure~\ref{fig:noninclusiongraphs}.
    We start by noting that at most one of the strings $a_1$ and $a_2$, or $b_1$ and $b_2$ respectively, can be extended as both the leftmost and the rightmost intersection point are between $a_1,a_2$, or $b_1,b_2$ respectively.
    Therefore, there are an $a_i$ and a $b_j$ that cannot be extended and therefore, the number of 
intersection points between $a_i$ and $b_j$ remains at $k$, showing that we cannot faithfully extend 
the representation.
\end{proof}
\section{Proof of Theorem 5}
\begin{proposition}
    For all $k\geq 1$, \textsc{$k$-string} $\subseteq$ \textsc{proper $2k$-string}.
\end{proposition}

\begin{proof}
    Given a graph $G$ with a $k$-string representation $R$, we start by creating a proper $2k$-string representation of $G$.
    First, we ensure each curve is simple by redrawing each curve in an $\varepsilon$-neighborhood of any self-crossing so that it becomes two arcs instead.
    Next, we note that the issue with possibly infinitely many intersection points could not have happened as we are already bounding the number of intersection points before.
    As a third step, we fix the issue that two curves may only touch and not cross each other.
    We do this by changing each touch into two crossings, which in total may double the number of intersection points.
    
    Formally, let $T$ be the intersection point where two curves touch (and do not cross).
    There is a $\varepsilon>0$ such that in the $\varepsilon$-neighborhood of $T$, there are no other intersection points in the representation.
    However, we note that there can possibly be multiple curves that intersect in $T$.
    We choose to redraw a curve $R(v)$ such that in the $\varepsilon$-neighborhood of $T$ divided into two regions by the curve $R(v)$, it does not happen that there are curves $R(v')$ and $R(v'')$ such that $R(v')$ only appears in one of the regions, $R(v'')$ appears only in the other region and the two curves touch $R(v)$ in $T$.
    (Such $R(v)$ can always be found: we start with any $R(u)$ touching in $T$, and if there are $R(u'), R(u'')$ that are each in just one of the two regions, we choose one of the two arbitrarily. Then, if $R(u')=R(w)$ still does not satisfy the conditions, we choose $R(w'), R(w'')$ that is in the same region as $R(u')$ when splitting the $\varepsilon$-neighborhood by $R(u)$. The region will always shrink, and as there are at most $n$ curves intersecting in the point, in a finite amount of steps, we will stop with some $R(v)$ that satisfies the conditions.)

    We then redraw $R(v)$ as per Figure~\ref{fig:redrawingtouches}, which adds one more intersection point for every curve $R(v)$ originally touched with both of the new intersection points being crossings.
    We have therefore removed at least a single touch, and therefore after a finite amount of such changes, we get a representation with no two curves touching.
    \begin{figure}
        \centering
        \input{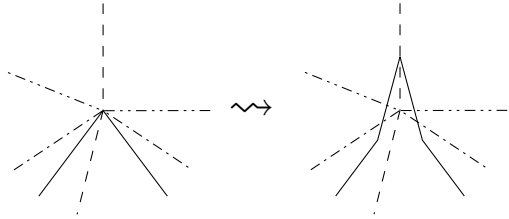}
        \caption{The process of redrawing touchings into crossings}
        \label{fig:redrawingtouches}
    \end{figure}

    Finally, we ensure that each crossing point is shared by at most two curves: this can be done by perturbing the curves slightly as now, curves may only cross and not touch in their intersection points.
\end{proof}

\begin{proposition}
    For all $\ell\geq 1$, \textsc{proper $\ell$-string} $\subseteq$ \textsc{$\precg{4\ell}$-string}.
\end{proposition}
\begin{proof}
    Starting with a proper $\ell$-string representation, we create a $4\ell$-string representation by ``doubling'' each string as in the proof of Proposition~\ref{prop:preciselystringtimesfour}.
    In such a representation, we know by the construction that every pair of strings that has a non-empty intersection intersects in $4k$ points for some $k\in\mathbb{N}$.
    In particular, the number of intersections is always even.

    Therefore, we may continue with the argument as in Proposition~\ref{prop:preciselystringplustwo} 
and for every pair of curves that has fewer than $4\ell$ intersection points, we add $2$ 
intersection points until it has precisely $4\ell$ intersection points.
    We note that we cannot skip $4\ell$ intersections as we start with an even number of intersections, and by adding two intersection points, the parity of the number of intersection points remains unchanged.
\end{proof}

Theorem~\ref{thm:stringtopreciselykstring} follows immediately from the two preceding propositions.
\end{document}